\newtheorem{thm}{Theorem}[section]
\newtheorem{cor}[thm]{Corollary}
\newtheorem{lem}[thm]{Lemma}
\theoremstyle{definition}
\newtheorem{defn}[thm]{Definition}
\newtheorem{rem}[thm]{Remark}
\numberwithin{equation}{section}
\DeclareMathOperator{\dstab}{dstab}
\DeclareMathOperator{\depth}{depth}
\DeclareMathOperator{\pd}{pd}
\DeclareMathOperator{\supp}{supp}
\def\B {\mathcal B}
\def\e {\mathbf e}
\def\m {\mathfrak m}
\def\k {\mathrm{k}}
\begin{document}

\title{Depth of powers of edge ideals of Cohen-Macaulay trees}

\author{Nguyen Thu Hang}
\address{Thai Nguyen University of Sciences, Tan Thinh Ward, Thai Nguyen City, Thai Nguyen, Vietnam}
\email{hangnt@tnus.edu.vn}

\author{Truong Thi Hien}
\address{Hong Duc University, 565 Quang Trung Street, Dong Ve Ward, Thanh Hoa, Vietnam}
\email{hientruong86@gmail.com}

\author{Thanh Vu}
\address{Institute of Mathematics, VAST, 18 Hoang Quoc Viet, Hanoi, Vietnam}
\email{vuqthanh@gmail.com}

\subjclass[2020]{05E40, 13D02, 13F55}
\keywords{depth of powers; Cohen-Macaulay trees; Cohen-Macaulay bipartite graphs}

%\subjclass{13D45, 05C90, 05E40, 05E45.}
%\keywords{Matroid, arboricity, Stanley-Reisner ideal, regularity.}
\date{}

\dedicatory{Dedicated to Professor Ngo Viet Trung on the occasion of his 70th birthday}
\commby{}
%-----------------------------------------------------------
% -----------------------------------------------------------
\maketitle
% -----------------------------------------------------------
\begin{abstract}
    Let $I$ be the edge ideal of a Cohen-Macaulay tree of dimension $d$ over a polynomial ring $S = \k[x_1,\ldots,x_{d},y_1,\ldots,y_d]$. We prove that for all $t \ge 1$, 
    $$\depth (S/I^t) =  \max \{d -t + 1, 1 \}.$$
\end{abstract}

\maketitle

\section{Introduction}
\label{sect_intro}
Let $S = \k[x_1, \ldots, x_n]$ be a standard graded polynomial ring over a field $\k$. For a homogeneous ideal $I \subset S$, we denote by $\dstab(I)$ the {\it index of depth stability} of $I$, i.e., the smallest positive natural number $k$ such that $\depth S/I^\ell = \depth S/I^k$ for all $\ell \ge k$. Such a number exists due to the result of Brodmann \cite{Br}. Let $G$ be a simple graph on the vertex set $V(G) = \{x_1,\ldots,x_n\}$ and edge set $E(G) \subseteq V(G) \times V(G)$. The edge ideal of $G$, denoted by $I(G)$, is the squarefree monomial ideal generated by $x_i x_j$ where $\{x_i,x_j\}$ is an edge of $G$. In a fundamental paper \cite{T}, Trung found a combinatorial formula for $\dstab(I(G))$ for large classes of graphs, including unicyclic graphs. In particular, when $G$ is a tree, $\dstab(I(G)) = n - \epsilon_0(G)$ where $\epsilon_0(G)$ is the number of leaves of $G$. Though we know the limit depth and its index of depth stability, intermediate values for depth of powers of edge ideals were only known in very few cases, e.g., for path graphs by Balanescu and Cimpoeas \cite{BC} and cycles and starlike trees by Minh, Trung, and the last author \cite{MTV1}. While the depth of powers of edge ideals of general trees is very mysterious \cite{MTV1}, in this paper, we compute the depth of powers of edge ideals of all Cohen-Macaulay trees.

In \cite{V}, Villarreal classified all Cohen-Macaulay trees. It says that a tree $G$ is Cohen-Macaulay if and only if it is the whisker graph of another tree $T$. In other words, $V(G) = V(T) \cup \{y_1, \ldots, y_d\}$ and $E(G) = E(T) \cup \{\{x_i,y_i\} \mid i = 1, \ldots, d\}$, where $T$ is an arbitrary tree on $d$ vertices. While the structure of $T$ could be very complicated, surprisingly, the depth of powers of $G$ does not depend on $T$. Namely,

\begin{thm}\label{depth_power_CM_tree}
    Let $I(G) \subset S = \k[x_1, \ldots,x_{d}, y_1, \ldots, y_d]$ be the edge ideal of a Cohen-Macaulay tree $G$ of dimension $d$. Then for all $t \ge 1$,
    $$\depth S/I(G)^t =  \max \{d-t+1, 1\}.$$
\end{thm}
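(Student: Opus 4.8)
By Villarreal's classification quoted above, $G$ is the whisker graph of a tree $T$ on vertices $x_1,\dots,x_d$, so $I:=I(G)=I(T)+(x_1y_1,\dots,x_dy_d)$. The case $t=1$ is exactly Villarreal's theorem: $S/I$ is Cohen--Macaulay of dimension $d$, and $\max\{d-t+1,1\}=d$. For $t\ge 2$ the plan is to induct on $d$, and within fixed $d$ on $t$; for $d=1$ one has $S/I^t=\k[x_1,y_1]/(x_1y_1)^t$, of depth $1$, as required.

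\textbf{The inductive short exact sequence.} Pick a leaf $x_1$ of $T$ with unique neighbour $x_2$, and write $I=(x_1y_1)+J$ with $J$ the ideal of the remaining edges, so $J=I(H)$ where $H:=G\setminus\{x_1y_1\}$ (now $y_1$ is isolated and the rest is $T$ with the whisker at $x_1$ deleted). Expanding powers, and using that $y_1$ divides no generator of $J$ while $x_1x_2$ is the only remaining edge at $x_1$, an elementary computation gives $(I^t:x_1y_1)=I^{t-1}$ and $(I^t,x_1y_1)=(I(H)^t,x_1y_1)$. This yields
$$0\longrightarrow S/I^{t-1}\xrightarrow{\ \cdot x_1y_1\ }S/I^t\longrightarrow S/(I(H)^t,x_1y_1)\longrightarrow 0 .$$

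\textbf{Analysing the third term.} Since $(A,x_1y_1)=(A,x_1)\cap(A,y_1)$ for any monomial ideal $A$, a Mayer--Vietoris sequence expresses $\depth S/(I(H)^t,x_1y_1)$ through $\depth S/(I(H)^t,x_1)$, $\depth S/(I(H)^t,y_1)$ and $\depth S/(I(H)^t,x_1,y_1)$. Setting $x_1$ and/or $y_1$ to zero turns $H$ into a disjoint union of whisker graphs of subtrees of $T$ and isolated vertices, to which the induction on $d$ applies --- except that $S/(I(H)^t,y_1)$ is governed by the edge ideal of $T'':=$ the tree $T$ with every vertex whiskered \emph{except} the leaf $x_1$, which is not Cohen--Macaulay. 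Hence the induction cannot be closed inside the class of Cohen--Macaulay trees. I would instead prove the analogous formula for the broader class of forests obtained from Cohen--Macaulay forests by attaching pendant vertices --- a class stable under the reductions above, with the relevant parameter $d$ being the dimension of the underlying Cohen--Macaulay part; the key auxiliary fact is that attaching a pendant vertex at an already whiskered vertex does not change $\depth S/I(\cdot)^t$, proved by running the same recursion on the new pendant edge $x_1x_2$ (where $x_1$ now becomes a free variable, so $(I(H)^t,x_1x_2)$ again splits as above).

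\textbf{Closing the bounds; the main obstacle.} The depth lemma applied to these sequences only yields inequalities, and in the range $2\le t\le d$ the two outer terms of the basic sequence differ by exactly one, so $\depth S/I^t$ is not determined by it; I expect this to be the main obstacle and would prove the two bounds by complementary means. For the lower bound $\depth S/I^t\ge\max\{d-t+1,1\}$, combine the inductive inequalities with the fact --- valid for edge ideals of any forest by normal torsion-freeness --- that $\m\notin\operatorname{Ass}(S/I^t)$, hence $\depth S/I^t\ge 1$ for all $t$, organising the induction so as to resolve the one-step ambiguity on the lower side. For the upper bound $\depth S/I^t\le\max\{d-t+1,1\}$ the case $t\ge d$ follows from $\depth S/I^t\ge 1$ together with $\dstab(I(G))=2d-d=d$ (Trung, for $d\ge 2$), while for $2\le t\le d$ I would exhibit a nonvanishing $H^{\,d-t+1}_{\m}(S/I^t)\ne 0$ --- equivalently, via Takayama's formula, a multidegree $\a\in\ZZ^{2d}$ whose associated degree complex $\D_\a$ satisfies $\h_{d-t}(\D_\a;\k)\ne 0$ --- built from a suitable minimal vertex cover of $G$ together with the new (even-connected) generators appearing in a colon $I^t:(e_1\cdots e_{t-1})$ by a product of $t-1$ edges. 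The crux is thus the combination of (i) pinning down the right enlarged class stable under the recursion, and (ii) producing, independently of the short exact sequences, the explicit reduced-cohomology class that forces the depth down to $d-t+1$ in the intermediate range.
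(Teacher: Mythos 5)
Your skeleton correctly isolates the two hard points, but neither is actually resolved, so there is a genuine gap on both sides. The decisive half is the upper bound $\depth S/I^t \le d-t+1$ for $2\le t\le d-1$, and your proposal for it is only a declaration of intent: you say you would ``exhibit'' a nonvanishing $H^{d-t+1}_{\m}(S/I^t)$ via Takayama's formula, but no multidegree, no degree complex, and no homology computation is produced. This is exactly where the paper does its real work, and by a different mechanism: it takes a \emph{connected} set $\e$ of $t$ edges inside the underlying tree $T$, uses Banerjee's even-connection theorem to show $I(G)^{t+1}:(e_1\cdots e_t)=I(H)$ where $H$ is $G$ with the induced subgraph on $N[\e]$ replaced by its bipartite completion (Lemma \ref{lem_colon_product_edges}), and then bounds $\pd S/I(H)$ from below by adjoining that complete bipartite block to a strongly disjoint family realizing $\pd$ of the weakly chordal graph $G[\bar{\e}]$ (Theorem \ref{thm_pd_weakly_chordal}, due to Kimura); since $G[\bar{\e}]$ is again a Cohen--Macaulay whisker forest of dimension $d-t-1$, Lemma \ref{lem_upperbound} closes the bound. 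Your parenthetical mention of even-connected generators of $I^t:(e_1\cdots e_{t-1})$ shows you are circling the right object, but without the connectedness hypothesis on $\e$, the identification of the colon ideal with a bipartite completion, and Kimura's lower bound for the projective dimension, the upper bound does not follow.

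On the lower bound you correctly diagnose that the class of Cohen--Macaulay trees is not stable under the colon/quotient recursion, but your proposed fix is not pinned down: the class of ``forests obtained from Cohen--Macaulay forests by attaching pendant vertices'' is not shown to be stable under the reductions (for instance, the colon steps produce ideals of the form $vI(T)^{t-1}+(\text{whisker edges})$, which are not powers of edge ideals of any graph in your class), and the auxiliary claim that attaching a pendant at an already whiskered vertex preserves $\depth S/I(\cdot)^t$ is asserted without proof. The paper's resolution is different and worth comparing: it proves the stronger statement $\depth R/\bigl(I(T)^t+(x_1y_1,\ldots,x_dy_d)\bigr)\ge d-t+1$ for an \emph{arbitrary} forest $T$ with an arbitrary whiskered subset $U$ of its vertices, the bound depending only on $|U|$ (Lemma \ref{lem_lower_bound_whisker_trees}); this enlarged family of \emph{ideals} (a power of $I(T)$ plus the squarefree whisker edges, rather than a power of the edge ideal of an enlarged family of graphs) is what actually closes under the leaf recursion via Lemma \ref{lem_depth_colon} and Lemma \ref{lem_colon_leaf}. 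Your short exact sequence and the identity $(I^t,x_1y_1)=(I(H)^t,x_1y_1)$ are fine as far as they go, but as written the proposal establishes neither inequality.
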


We now outline the ideas to carry out this computation. First, we show a general upper bound for the depth of powers of trees. For that purpose, we introduce some notation. Let $\e = \{e_1, \ldots, e_t\}$ be a set of $t$ distinct edges of $G$. We consider $\e$ itself as a subgraph of $G$ with edge set $E(\e) = \{e_1, \ldots, e_t\}$. We denote by $N[\e]$ the closed neighbourhood of $\e$ in $G$ (see Subsection \ref{subsection_graphs} for the definition of closed neighbourhood). Furthermore, $G[\e]$ denotes the induced subgraph of $G$ on $N[\e]$ and $G[\bar {\e}]$ denotes the induced subgraph of $G$ on $V(G) \setminus N[\e]$.

\begin{lem}\label{lem_a3} Let $\e = \{e_1, \ldots, e_t\}$ be a set of $t$ distinct edges of a simple graph $G$. Assume that $G[\e]$ is bipartite, $G[\bar {\e}]$ is weakly chordal, and $\e$, viewed as a subgraph of $G$, is connected. Let $R$ be the polynomial ring over the vertex set of $G[\bar{\e}]$. Then 
$$\depth S/I(G)^{t+1}  \le 1 + \depth R/I(G[\bar {\e}]).$$    
\end{lem}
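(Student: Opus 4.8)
The plan is to replace $I(G)^{t+1}$ by a colon ideal whose graph‑theoretic shape we can read off, and then to compute with that. Put $f=e_1\cdots e_t$, the product of the squarefree quadratic generators of $I(G)$ attached to the edges of $\e$. Since $\e$ is connected and has $t$ edges, $f$ is a minimal generator of $I(G)^t$, while $\deg f=2t<2(t+1)$ forces $f\notin I(G)^{t+1}$; hence multiplication by $f$ embeds $S/(I(G)^{t+1}:f)$ as a nonzero submodule of $S/I(G)^{t+1}$, giving the short exact sequence
\[
0\longrightarrow S/(I(G)^{t+1}:f)\xrightarrow{\ \cdot f\ } S/I(G)^{t+1}\longrightarrow S/(I(G)^{t+1},f)\longrightarrow 0 .
\]
The depth lemma shows that if $\depth S/(I(G)^{t+1},f)\ge \depth S/(I(G)^{t+1}:f)$, then $\depth S/I(G)^{t+1}\le \depth S/(I(G)^{t+1}:f)$. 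So the proof splits into: (a) the inequality $\depth S/(I(G)^{t+1},f)\ge \depth S/(I(G)^{t+1}:f)$, which I would reach by peeling the edges of $\e$ off one at a time, in an order for which each partial union stays connected, through the analogous short exact sequences — this is the step I expect to be carried by the paper's earlier auxiliary lemmas; and (b) the bound $\depth S/(I(G)^{t+1}:f)\le 1+\depth R/I(G[\bar{\e}])$.

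For (b) the key is to identify $(I(G)^{t+1}:f)$. By Banerjee's description of colons of powers of edge ideals, it is $I(G)$ together with the monomials $uv$ for vertices $u,v$ even‑connected with respect to $f$ (and the squares $u^2$ for self‑even‑connected $u$). Since $G[\e]$ is bipartite, with parts $A$ and $B$, any walk alternating edges of $G$ with edges of $\e$ changes part at every step, so an even‑connecting walk, being of odd length, joins $A$ to $B$; thus no square occurs, no new edge lies inside $A$ or inside $B$, and every new edge lies inside $N[\e]$ (all vertices of such a walk lie in $N[\e]$). Conversely, given $a\in A\cap N[\e]$ and $b\in B\cap N[\e]$, one produces an even‑connecting walk by taking a path inside the connected subgraph $\e$ between vertices of $V(\e)$ equal or adjacent to $a$ and to $b$, padded by one edge of $G$ at each end; a parity count using the bipartition shows this walk has odd length and the required alternation, and its $\e$‑edges are distinct, matching the multiplicities in $f$. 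Hence
\[
(I(G)^{t+1}:f)=I(\widetilde G),\qquad \widetilde G[N[\e]]=K_{A,B},\quad \widetilde G\setminus N[\e]=G[\bar{\e}],
\]
with $\widetilde G$ agreeing with $G$ on the edges joining $N[\e]$ to $W:=V(G)\setminus N[\e]$; in particular $\widetilde G$ has no edge inside $A$ or inside $B$.

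It then remains to bound $\depth S/I(\widetilde G)$. The complete bipartite part of $\widetilde G$ yields $I(\widetilde G)=\q_A\cap\q_B$ with $\q_A=\p_A+I(\widetilde G\setminus A)$ and $\q_B=\p_B+I(\widetilde G\setminus B)$, where $\p_A,\p_B$ are the monomial primes on the vertices of $A$ and $B$; then $\q_A+\q_B=I(G[\bar{\e}])+\p_A+\p_B$, and Mayer--Vietoris gives
\[
0\longrightarrow S/I(\widetilde G)\longrightarrow S/\q_A\oplus S/\q_B\longrightarrow R/I(G[\bar{\e}])\longrightarrow 0 .
\]
The comparison I would make is with the disjoint union $K_{A,B}\sqcup G[\bar{\e}]$: since $A$ and $B$ are nonempty, $\depth S/I(K_{A,B})=1$, so $\depth S/\bigl(I(K_{A,B})+I(G[\bar{\e}])\bigr)=1+\depth R/I(G[\bar{\e}])$ by additivity of depth over disjoint unions; the content of (b) is that re‑inserting the edges of $\widetilde G$ joining $N[\e]$ to $W$ does not push the depth above this value. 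I would prove this by induction on $|N[\e]|$, repeatedly splitting off one vertex $v$ of $A$ (respectively $B$) via $0\to S/(I(\widetilde G):v)\to S/I(\widetilde G)\to S/(I(\widetilde G),v)\to 0$ and feeding the induction hypothesis to $S/(I(\widetilde G),v)$ (an edge ring whose "$\bar{\e}$‑part" is still $G[\bar{\e}]$, hence still weakly chordal) and to $S/(I(\widetilde G):v)$ (supported on strictly fewer variables, its weakly chordal part replaced by an induced subgraph, still weakly chordal); closedness of both weak chordality and bipartiteness under taking induced subgraphs is what keeps the induction inside its hypotheses. The delicate point — and the one I expect to be the main obstacle — is the depth‑lemma bookkeeping that forces the inequalities the right way in both (a) and this induction, e.g.\ ruling out that a vertex of $V(\e)$ all of whose $G$‑neighbours lie in $N[\e]$ (so that it becomes isolated after deleting the opposite part) inflates the depth. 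This is precisely where all three hypotheses on $\e$ — bipartiteness of $G[\e]$, connectedness of $\e$, and weak chordality of $G[\bar{\e}]$ — are used together.
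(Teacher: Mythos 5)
Your reduction is sound up to a point, and part of it duplicates machinery the paper already has: the inequality $\depth S/I(G)^{t+1}\le \depth S/(I(G)^{t+1}:f)$ for $f=e_1\cdots e_t$ needs no exact-sequence bookkeeping or edge-peeling at all, since for a monomial ideal $I$ and a monomial $f\notin I$ one always has $\depth S/I\le \depth S/(I:f)$ (Rauf's result, Lemma \ref{lem_upperbound}); so your step (a) is an unnecessary detour whose claimed sub-inequality $\depth S/(I^{t+1},f)\ge\depth S/(I^{t+1}:f)$ you neither need nor prove. Your identification of $(I(G)^{t+1}:f)$ as the edge ideal of the graph $H$ obtained by bipartite-completing $G[\e]$ is exactly the paper's Lemma \ref{lem_colon_product_edges}, and your even-connectedness sketch of it is fine.

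The genuine gap is the last and essential step: bounding $\depth S/I(H)$ from above by $1+\depth R/I(G[\bar{\e}])$. Your Mayer--Vietoris sequence $0\to S/I(H)\to S/\q_A\oplus S/\q_B\to R/I(G[\bar{\e}])\to 0$ only yields $\depth R/I(G[\bar{\e}])\ge\min\{\depth S/I(H)-1,\ \depth S/\q_A,\ \depth S/\q_B\}$, so to extract the desired inequality you must first show $\min(\depth S/\q_A,\depth S/\q_B)\ge \depth S/I(H)-1$; you have no handle on the depths of $\q_A$ and $\q_B$ (which involve the edges joining $N[\e]$ to its complement), and the vertex-splitting induction you propose as a substitute is left entirely uncarried, with the decisive inequality directions explicitly flagged by you as ``the main obstacle.'' Moreover, weak chordality of $G[\bar{\e}]$ enters your argument only as a closure property under induced subgraphs, never substantively. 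The paper closes this step by a different and much shorter route: since $G[\bar{\e}]$ is weakly chordal, the Nguyen--Vu formula (Theorem \ref{thm_pd_weakly_chordal}) produces a strongly disjoint family $B_1,\ldots,B_g$ of complete bipartite subgraphs realizing $\pd R/I(G[\bar{\e}])=\sum|V(B_i)|-g$; appending the complete bipartite graph $\widetilde{G[\e]}=K_{A,B}$ to this family gives a strongly disjoint family in $H$, so Kimura's non-vanishing theorem forces $\pd S/I(H)\ge \pd R/I(G[\bar{\e}])+|V(\widetilde{G[\e]})|-1$, and Auslander--Buchsbaum converts this into $\depth S/I(H)\le 1+\depth R/I(G[\bar{\e}])$. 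That use of Kimura's theorem is the missing idea, and it is precisely where the weak chordality hypothesis does its work.
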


For the lower bound, we prove the following general bound for the depth of powers of edge ideals of whisker trees.

\begin{lem}\label{lem_a4} Let $T$ be a forest on $n$ vertices. Let $U \subset V(T)$ be a subset of vertices of $T$, and $G$ be the new forest obtained by adding a whisker to each vertex in $U$. Namely, $V(G) = V(T) \cup \{y_i \mid x_i \in U\}$ and $E(G) = E(T) \cup \{ \{x_i, y_i\} \mid x_i \in U\}$. Then for all $t \ge 1$, we have 
$$\depth (R/I(G)^t) \ge |U| - t +  1,$$
where $R = \k[x_1,\ldots,x_n,y_j \mid x_j \in U]$ is the polynomial ring over the variables corresponding to vertices of $G$.
\end{lem}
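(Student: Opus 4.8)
The plan is a double induction, the outer variable being the power $t$ and, for fixed $t$, the inner variable $n=|V(T)|$; the cases $n\le 1$ are immediate, and $t=1$ (i.e.\ the bound $\depth R/I(G)\ge |U|$ for a whiskered forest) is the start of the induction, handled by the same scheme below, the only change being that when a colon subideal equals $R$ one bounds the corresponding quotient directly, using $\depth(R/I(G)^{0})=\infty$. The engine is, for a vertex $v$ of a graph $H$, the short exact sequence
\[
0\longrightarrow R/(I(H)^t:v)\xrightarrow{\ \cdot v\ }R/I(H)^t\longrightarrow R/(I(H)^t+(v))\longrightarrow 0,
\]
together with $\depth B\ge\min\{\depth A,\depth C\}$ for $0\to A\to B\to C\to 0$ and four elementary facts, all verified by a direct monomial computation: (a) $R/(I(H)^t+(v))\cong (R/(v))/I(H\setminus v)^t$; (b) $(I(H)^t:v)=I(H\setminus v)^t+N_H(v)\,I(H)^{t-1}$, with $N_H(v)$ the ideal generated by the variables adjacent to $v$; (c) a variable not occurring in a monomial ideal $J$ is a free polynomial variable, so $\depth R/J=1+\depth (R/(w))/J$; and (d) $(I(H')^t:z)\subseteq I(H)^{t-1}$ whenever $H'$ is a subgraph of $H$ and $z$ a vertex.

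An isolated vertex of $T$ outside $U$ is a free variable and lowers $n$, so assume every isolated vertex of $T$ lies in $U$; if $T$ has an edge it has a leaf $x$, while if $T$ is edgeless we take $x\in U$ an isolated vertex (the degenerate form of Case~2). \emph{Case 1: some leaf $x$ of $T$ lies outside $U$,} with $T$-neighbour $z$. Split $R/I(G)^t$ by $x$: the quotient term $(R/(x))/I(G\setminus x)^t$ has $G\setminus x=(T\setminus x)$ whiskered at $U$, so by the inner induction its depth is $\ge|U|-t+1$; the colon term, $R/(I(G\setminus x)^t+z\,I(G)^{t-1})$ by (b), is split again by $z$, with colon term $R/I(G)^{t-1}$ by (d) (depth $\ge|U|-t+2$ by the outer induction) and quotient term $(R/(z))/I(G\setminus\{x,z\})^t$, a smaller whiskered forest plus freed variables — $x$, and the whisker of $z$ if $z\in U$ — of depth $\ge|U|-t+1$ by (c) and the inner induction. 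Hence $\depth R/I(G)^t\ge|U|-t+1$.

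\emph{Case 2: every leaf of $T$ lies in $U$} (this includes $U=V(T)$, the Cohen-Macaulay tree case). Pick a leaf $x\in U$, with whisker $y$ and, when it exists, $T$-neighbour $z$. Split $R/I(G)^t$ by $x$. In the quotient term $(R/(x))/I(G\setminus x)^t$ the vertex $y$ has become isolated, so by (c) the depth is $\ge 1+(|U|-1-t+1)=|U|-t+1$ (inner induction applied to $(T\setminus x)$ whiskered at $U\setminus\{x\}$). The colon term is $R/(I(G\setminus x)^t+z\,I(G)^{t-1}+y\,I(G)^{t-1})$ by (b); split it by $y$. Its colon term is $R/I(G)^{t-1}$ — here one uses that $zx$ is an edge, so $z\,(I(G)^{t-1}:y)\subseteq I(G)^{t-1}$, together with (d) — of depth $\ge|U|-t+2$ by the outer induction. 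Its quotient term, over $R^{(1)}=R/(y)$, is $R^{(1)}/(I(G\setminus\{x,y\})^t+z\,I(G\setminus y)^{t-1})$; split it once more, by $z$. There the colon term is $R^{(1)}/I(G\setminus y)^{t-1}$ by (d), with $G\setminus y=T$ whiskered at $U\setminus\{x\}$, giving depth $\ge(|U|-1)-(t-1)+1=|U|-t+1$ (outer induction); and the quotient term is $(R^{(1)}/(z))/I(G\setminus\{x,y,z\})^t$, in which $x$ has become a free variable and $G\setminus\{x,y,z\}=(T\setminus\{x,z\})$ whiskered at $U\setminus\{x,z\}$, so by (c) and the inner induction the depth is $\ge 1+(|U\setminus\{x,z\}|-t+1)+\delta$ with $\delta=1$ if $z\in U$, while if $z\notin U$ one has $|U\setminus\{x,z\}|=|U|-1$; either way this is $\ge|U|-t+1$. (If $x$ is an isolated vertex of $T$, the splitting by $z$ is omitted.) Chaining the three exact sequences gives $\depth R/I(G)^t\ge|U|-t+1$.

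The only genuine difficulty is the bookkeeping of Case~2: each of the obvious single splittings — by $x$, or by its whisker $y$ — drops $|U|$ by one and therefore yields only $|U|-t$, and the missing unit is recovered precisely by splitting in the order $x$, $y$, $z$, after which $x$ is a genuine free polynomial variable in the surviving quotient and supplies the extra $+1$. Verifying the colon identities (b), (d) and the precise free-variable count in each subcase, and checking that every recursive call is to strictly smaller data (smaller $t$, or smaller $n$ with the same $t$), is routine but must be carried out with care.
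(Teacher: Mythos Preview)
Your proof is correct. It follows the same broad strategy as the paper—induction combined with colon/quotient short exact sequences anchored at a leaf of $T$—but the two are organized differently. The paper strengthens the induction hypothesis, proving
\[
\depth R/(I(G)^t + I(H)) \ge |U|-t+1
\]
for every $H$ with $E(H)$ contained in the set of whisker edges; this lets it reduce (via the identity $I(G)^t:(x_iy_i)=I(G)^{t-1}$) to the single ideal $I(T)^t + (x_1y_1,\ldots,x_dy_d)$, after which a two-step split by a leaf $u$ of $T$ and then by its $T$-neighbour $v$ suffices. You instead work directly with $I(G)^t$ and compensate with a three-step split $x,\,y,\,z$, using the explicit formula $I(H)^t:v = I(H\setminus v)^t + N_H(v)\,I(H)^{t-1}$ at each stage. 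The paper's strengthened hypothesis shortens the case analysis at the price of carrying the auxiliary family $I(G)^t+I(H)$; your approach avoids that auxiliary family but requires one more layer of splitting and more careful free-variable bookkeeping. One minor remark: your justification of $\bigl((I(G)^t:x):y\bigr)=I(G)^{t-1}$ via ``$zx$ is an edge, so $z\,(I(G)^{t-1}:y)\subseteq I(G)^{t-1}$'' is correct but circuitous—this identity is exactly $I(G)^t:(xy)=I(G)^{t-1}$, the colon by a leaf edge, which the paper invokes directly.
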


Our method allows us to compute the depth of powers of the edge ideal of a Cohen-Macaulay bipartite graph constructed by Banerjee and Mukundan \cite{BM}.
\begin{thm}\label{thm_depth_BM_ideals} Let $S = k[x_1,\ldots,x_d,y_1,\ldots,y_d]$. For a fix integer $j$ such that $1 \le j \le d$, let $G_{d,j}$ be a bipartite graph whose edge ideal is $I(G_{d,j}) = (x_i y_i, x_1 y_i, x_k y_j \mid 1 \le i \le d, 1 \le k \le j)$. Then 
$$\depth S/I(G_{d,j})^s = \max(1,d - j -s + 3),$$
for all $s \ge 2$.    
\end{thm}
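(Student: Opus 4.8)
The plan is to prove matching upper and lower bounds for $\depth S/I^{s}$, writing $I=I(G_{d,j})$ and $L=(y_{1},\dots ,y_{d})$. The structural fact driving everything is that every edge of $G_{d,j}$ joins an $x$-vertex to a $y$-vertex, so $I\subseteq L$; I also use repeatedly that $x_{1}$ is adjacent to every $y_{i}$ and that $y_{j}$ is adjacent to $x_{1},\dots ,x_{j}$. For $j=2$ the graph $G_{d,j}$ is the whisker graph of a star, so the statement is Theorem~\ref{depth_power_CM_tree}; hence I assume $j\ge 3$. Then $d-j-s+3\ge 1$ precisely when $s\le d-j+1$, and it suffices to show $\depth S/I^{s}\le d-j-s+3$ for all $s\ge 2$, $\depth S/I^{s}\ge d-j-s+3$ for $2\le s\le d-j+1$, and $\depth S/I^{s}=1$ for $s\ge d-j+2$.

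For the upper bound I apply Lemma~\ref{lem_a3}. When $2\le s\le d-j+2$ take $\e_{s}=\{x_{1}y_{j},x_{1}y_{j+1},\dots ,x_{1}y_{j+s-2}\}$, a set of $s-1$ distinct edges forming a star at $x_{1}$, hence connected, with $G_{d,j}[\e_{s}]$ bipartite as an induced subgraph. A direct neighbourhood computation gives $N[\e_{s}]=\{x_{1},\dots ,x_{j+s-2}\}\cup\{y_{1},\dots ,y_{d}\}$, so $G_{d,j}[\bar{\e}_{s}]$ is the edgeless graph on the $d-j-s+2$ vertices $x_{j+s-1},\dots ,x_{d}$, which is trivially weakly chordal with $\depth R/I(G_{d,j}[\bar{\e}_{s}])=d-j-s+2$. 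Lemma~\ref{lem_a3} then gives $\depth S/I^{s}=\depth S/I^{(s-1)+1}\le 1+(d-j-s+2)=d-j-s+3$. For $d-j+2\le s\le |E(G_{d,j})|+1$ one instead takes a connected set $\e$ of $s-1$ edges with $N[\e]=V(G_{d,j})$ — enlarge $\{x_{1}y_{j},\dots ,x_{1}y_{d}\}$ one admissible edge at a time — so that $G_{d,j}[\bar{\e}]$ is empty and Lemma~\ref{lem_a3} yields $\depth S/I^{s}\le 1$. Since $G_{d,j}$ is bipartite, $I^{s}=I^{(s)}$ has no embedded associated prime, so $\m\notin\operatorname{Ass}(S/I^{s})$ and $\depth S/I^{s}\ge 1$ for every $s$; this, the bound just proved, and the eventual constancy of $\depth S/I^{s}$ \cite{Br} (once one notes $|E(G_{d,j})|+1$ exceeds $\dstab(I)$) force $\depth S/I^{s}=1$ for all $s\ge d-j+2$.

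For the lower bound I use the identity $I^{s}:x_{1}=L\,I^{s-1}$: writing $x_{1}f$ as a multiple of a product $g$ of $s$ edges, if $g$ does not involve $x_{1}$ then $g\mid f$ and $f\in I^{s}\subseteq L\,I^{s-1}$, while if $g$ is divisible by some $x_{1}y_{k}$ then $y_{k}\mid f$ and $f/y_{k}\in I^{s-1}$ (the reverse inclusion uses $I\subseteq L$). Plugging this into
$$0\to S/(L\,I^{s-1})\xrightarrow{\ \cdot\, x_{1}\ }S/I^{s}\to S/(I^{s}+(x_{1}))\to 0$$
reduces the lower bound to showing both outer modules have depth $\ge d-j-s+3$. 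The cokernel is $\bar S/I(\widetilde G)^{s}$ with $\bar S=S/(x_{1})$ and $\widetilde G=G_{d,j}\setminus x_{1}$; inspection shows $\widetilde G$ is a forest on $2d-1$ vertices obtained from a forest on $d$ vertices by attaching a whisker to $d-1$ of them — the core on $x_{2},\dots ,x_{j},y_{2},\dots ,y_{j}$ becomes a spider with centre $y_{j}$, while $x_{j+1},\dots ,x_{d}$ keep their whiskers $y_{j+1},\dots ,y_{d}$ — so Lemma~\ref{lem_a4} gives $\depth\bar S/I(\widetilde G)^{s}\ge (d-1)-s+1=d-s\ge d-j-s+3$, using $j\ge 3$.

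What remains, and what I expect to be the \emph{main obstacle}, is the bound $\depth S/(L\,I^{s-1})\ge d-j-s+3$. I would argue by a nested induction on $s$: from $0\to I^{s-1}/L\,I^{s-1}\to S/(L\,I^{s-1})\to S/I^{s-1}\to 0$ together with the inductive value $\depth S/I^{s-1}\ge d-j-s+4$, it is enough to bound $\depth\bigl(I^{s-1}/L\,I^{s-1}\bigr)$ below by $d-j-s+3$; since $I^{s-1}\subseteq L$, this module is $\Tor_{1}^{S}(S/I^{s-1},S/L)$, annihilated by $L$ and hence a module over $\k[x_{1},\dots ,x_{d}]$, which I would estimate by iterating colon and restriction sequences along the leaf $y_{1}$ (adjacent only to $x_{1}$) and the whisker leaves $x_{j+1},\dots ,x_{d}$, each step trading a factor of $L$ for a lower power of $I$ and feeding back into the induction. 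The difficulty is concentrated here — keeping the successive colon ideals $L\,I^{s-1}:y_{1}$, $(\,\cdot\,):x_{d}$, \dots\ under control and making the numerology land exactly on $d-j-s+3$; the upper bound and the reduction to a whisker forest via $x_{1}$ are by comparison routine.
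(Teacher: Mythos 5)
Your upper bound is essentially the paper's own argument: the same set $\e=\{x_1y_j,\ldots,x_1y_{j+s-2}\}$ fed into Lemma~\ref{lem_a3}, with $G_{d,j}[\bar\e]$ the edgeless graph on $x_{j+s-1},\ldots,x_d$ (the paper's ``$d-j-s+2$ edges'' is a typo for vertices). Your treatment of the tail $s\ge d-j+2$ is more roundabout than necessary: since $x_dy_d$ is a leaf edge of $G_{d,j}$, Lemma~\ref{lem_depth_power_non_increasing} gives monotonicity directly, with no need for $\dstab$ estimates. Your handling of the cokernel $S/(I^s+(x_1))$ is correct and is a genuinely different (and slicker) route than the paper's: recognizing $G_{d,j}\setminus x_1$ as a forest with $d-1$ whiskers and quoting Lemma~\ref{lem_a4} gives $\depth\ge d-s\ge d-j-s+3$ for $j\ge3$ in one line, whereas the paper handles the sum $J+(y_d)$ inside an induction on $d$. (Your restriction to $j\ge2$ is also the right reading of the statement, which degenerates at $j=1$ since $G_{d,1}=G_{d,2}$.)

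The gap is exactly where you locate it, and it is fatal as written: you never prove $\depth S/(L\,I^{s-1})\ge d-j-s+3$, where $L=(y_1,\ldots,y_d)$. The identity $I^s:x_1=L\,I^{s-1}$ is correct, but it takes you outside the family of ideals for which you have any induction hypothesis: $L\,I^{s-1}$ is neither a power of an edge ideal nor of the form ``power plus whisker generators.'' The exact sequence $0\to I^{s-1}/LI^{s-1}\to S/LI^{s-1}\to S/I^{s-1}\to 0$ only trades the problem for a depth bound on the $\k[x_1,\ldots,x_d]$-module $I^{s-1}/LI^{s-1}$, whose presentation is not that of a monomial quotient, and ``iterating colon and restriction sequences along the leaves'' is a plan, not a proof; nothing in your setup controls the successive colons of $L\,I^{s-1}$. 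The source of the difficulty is the choice of colon element: $x_1$ dominates all of $\{y_1,\ldots,y_d\}$, so $I^s:x_1$ picks up the large ideal $L$. The paper avoids this by fixing $j$ and inducting on $d$ (base case $d=j$ from \cite{BM}): it first absorbs the whisker edges $x_iy_i$ ($i>j$) as in Step~3 of Lemma~\ref{lem_lower_bound_whisker_trees}, then colons by the \emph{degree-two} vertex $y_d$, obtaining $J:y_d=x_1I(H)^{s-1}+(x_iy_i\mid j+1\le i\le d-1)+(x_d)$; a further application of Lemma~\ref{lem_depth_colon} with $x_1$ (the sum $K+(x_1)$ being monomial-trivial) returns $I(H)^{s-1}+(x_iy_i\mid j+1\le i\le d-1)$, i.e., the same shape with $(d,s)$ replaced by $(d-1,s-1)$, and the numerology $d-j-s+3$ is preserved. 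To complete your proof you should either replace the colon by $x_1$ with this colon by $y_d$, or supply an actual argument for $\depth S/(L\,I^{s-1})$; at present the lower bound is unproved.
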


We structure the paper as follows. In Section \ref{sec_pre}, we set up the notation and provide some background. In Section \ref{sec_depth_power_CM_trees}, we prove Theorem \ref{depth_power_CM_tree}. In Section \ref{sec_depth_power_CM_bipartite_graph}, we prove Theorem \ref{thm_depth_BM_ideals}.

\section{Preliminaries}\label{sec_pre}
In this section, we recall some definitions and properties concerning the depth of monomial ideals and edge ideals of graphs. The interested readers are referred to \cite{BH, D} for more details.

Throughout this section, we denote $S = \k[x_1,\ldots, x_n]$ a standard graded polynomial ring over a field $\k$. Let $\m = (x_1,\ldots, x_n)$ be the maximal homogeneous ideal of $S$.

\subsection{Depth} For a finitely generated graded $S$-module $L$, the depth of $L$ is defined to be
$$\depth(L) = \min\{i \mid H_{\m}^i(L) \ne 0\},$$
where $H^{i}_{\m}(L)$ denotes the $i$-th local cohomology module of $L$ with respect to $\m$. 

\begin{defn} A finitely generated graded $S$-module $L$ is called Cohen-Macaulay if $\depth L = \dim L$. A homogeneous ideal $I \subseteq S$ is said to be Cohen-Macaulay if $S/I$ is a Cohen-Macaulay $S$-module.
\end{defn}

The following two results about the depth of monomial ideals will be used frequently in the sequence. The first one is \cite[Corollary 1.3]{R}. The second one is \cite[Theorem 4.3]{CHHKTT}.

\begin{lem}\label{lem_upperbound} Let $I$ be a monomial ideal and $f$ a monomial such that $f \notin I$. Then
    $$\depth S/I \le \depth S/(I:f)$$
\end{lem}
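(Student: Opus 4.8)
The plan is to begin from the standard colon short exact sequence and reduce the whole statement to the single-variable case. Writing $f = x_{i_1}\cdots x_{i_k}$ as a product of variables and using $(I : gh) = ((I:g):h)$, it suffices to prove the inequality when $f = x$ is a variable with $x \notin I$. Indeed, since $f \notin I$, none of the intermediate products $x_{i_1}\cdots x_{i_j}$ lies in $I$, so each intermediate colon is a proper monomial ideal and the chain
$$\depth S/I \le \depth S/(I:x_{i_1}) \le \depth S/((I:x_{i_1}):x_{i_2}) \le \cdots \le \depth S/(I:f)$$
follows by applying the variable case $k$ times, each step to the monomial ideal produced by the previous one. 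So from now on I assume $f = x$ is a variable.

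For a variable $x$ there is the exact sequence
$$0 \longrightarrow S/(I:x) \xrightarrow{\ \cdot x\ } S/I \longrightarrow S/(I+(x)) \longrightarrow 0,$$
where the first map is multiplication by $x$ (injective because $gx \in I \iff g \in (I:x)$) and the cokernel is $S/(I+(x))$; a degree shift on the first term is irrelevant for depth. Applying the depth lemma (see \cite{BH}) to this sequence yields
$$\depth S/(I:x) \ge \min\{\depth S/I,\ \depth S/(I+(x)) + 1\}.$$
Hence it is enough to establish the single inequality
$$\depth S/(I+(x)) \ge \depth S/I - 1, \qquad (\star)$$
namely that specializing the variable $x$ to $0$ lowers the depth of the cyclic monomial quotient by at most one. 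Granting $(\star)$, the minimum above equals $\depth S/I$, and the lemma follows.

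The heart of the matter is $(\star)$, and this is where I expect the real difficulty, because the naive module-theoretic statement behind it is \emph{false}: for a general finitely generated module $M$ and $x \in \m$ one may have $\depth(M/xM) < \depth M - 1$. For instance, $M = \operatorname{coker}\left(\begin{smallmatrix} y & z \\ -x & 0 \\ 0 & -x\end{smallmatrix}\right)$ over $\k[x,y,z]$ has a length-one free resolution, so $\depth M = 2$, whereas $M/xM \cong \k \oplus \k[y,z]^2$ has $\depth(M/xM) = 0$. Thus the \emph{cyclic, monomial} structure of $S/I$ must be used in an essential way, and the symmetry of the sequence above cannot settle $(\star)$ on its own (its long exact sequence relates $S/(I+(x))$ back to $S/(I:x)$, which is exactly what we are trying to control). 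The route I would take is to pass to the fine $\mathbb{Z}^n$-grading: multiplication by $x$ is homogeneous of multidegree $\e_n$, so the local cohomology modules split into strands indexed by $\a \in \mathbb{Z}^n$, and Takayama's formula expresses each piece $H^i_{\m}(S/I)_{\a}$ through the reduced simplicial cohomology of a degree complex $\Delta_{\a}(I)$. In the squarefree model the operations $(I:x)$ and $I+(x)$ correspond to the link and the deletion of the vertex $x$, so $(\star)$ becomes a consequence of the standard long exact sequence relating the reduced cohomology of a complex, its deletion, and its link.

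Carrying out this multidegree bookkeeping — identifying the relevant degree complexes $\Delta_{\a}$ in every multidegree and deducing, from the vanishing of $H^i_{\m}(S/I)$ below $\depth S/I$, the corresponding vanishing for $S/(I+(x))$ below $\depth S/I - 1$ — is the main obstacle. Once $(\star)$ is in place, the two displayed inequalities combine to give $\depth S/(I:x) \ge \depth S/I$ in the variable case, and the reduction of the first paragraph then yields $\depth S/I \le \depth S/(I:f)$ for an arbitrary monomial $f \notin I$.
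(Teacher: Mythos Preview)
The paper does not give a proof of this lemma; it simply records it as \cite[Corollary 1.3]{R}. So there is no in-paper argument to compare your attempt against.

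Regarding your proposal itself: the short exact sequence is set up correctly (and in fact already works for an arbitrary monomial $f$, so the preliminary reduction to a single variable is unnecessary), but the reformulation as $(\star)$ is not a genuine simplification. The inequality $(\star)$ is \emph{equivalent} to the variable case of the lemma. Indeed, writing $a=\depth S/(I:x)$, $b=\depth S/I$, $c=\depth S/(I,x)$, the depth lemma applied to $0\to S/(I:x)\to S/I\to S/(I,x)\to 0$ gives simultaneously
\[
a\ \ge\ \min\{b,\ c+1\}
\qquad\text{and}\qquad
c\ \ge\ \min\{a-1,\ b\},
\]
and a two-line case analysis shows that $a\ge b$ holds if and only if $c\ge b-1$. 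So isolating $(\star)$ only restates the difficulty in different variables; nothing has been gained.

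You then acknowledge that establishing $(\star)$ is ``the main obstacle'' and do not carry it out. The link/deletion remark applies only when $I$ is squarefree, and extending it to arbitrary monomial ideals requires either polarization (where one must track how $(I:x)$ and $I+(x)$ behave under polarization, which is not automatic) or the full Takayama bookkeeping you allude to; neither is executed. As written, the proposal is a reasonable outline but not a proof: the one substantive step it identifies is left open, and that step is equivalent to the statement itself.

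If you want a complete argument, Rauf's original proof in \cite{R} is short and exploits the $\ZZ^n$-grading directly, without the detour through $(\star)$.
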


\begin{lem}\label{lem_depth_colon} Let $I$ be a monomial ideal and $f$ a monomial. Then 
$$\depth S/I \in \{\depth (S/I:f), \depth (S/(I,f))\}.$$    
\end{lem}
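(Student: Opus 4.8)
The plan is to derive the statement from the short exact sequence of graded $S$-modules
\[
0 \longrightarrow S/(I:f) \xrightarrow{\ \cdot f\ } S/I \longrightarrow S/(I,f) \longrightarrow 0,
\]
whose left-hand map is multiplication by $f$. I would first record its exactness: multiplication by $f$ is injective on $S/(I:f)$ precisely because $(I:f) = \{g \mid fg \in I\}$, its image equals $(I+(f))/I$, and the induced cokernel inside $S/I$ is $S/(I+(f)) = S/(I,f)$. Writing $A = S/(I:f)$, $B = S/I$, and $C = S/(I,f)$, the goal becomes $\depth B \in \{\depth A, \depth C\}$.

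Applying the depth lemma to $0 \to A \to B \to C \to 0$ yields the three inequalities $\depth B \ge \min\{\depth A, \depth C\}$, $\depth A \ge \min\{\depth B, \depth C + 1\}$, and $\depth C \ge \min\{\depth A - 1, \depth B\}$. The first already gives $\depth B \ge \min\{\depth A, \depth C\}$. Combining the last two, I would verify by a short case analysis that $\depth B$ cannot lie strictly between $\depth A$ and $\depth C$: for instance, if $\depth A > \depth C$ and $\depth C < \depth B < \depth A$, the third inequality forces $\depth A \le \depth C + 1$, leaving no integer in between, and the opposite orientation is excluded even more quickly by the second inequality. Consequently the desired membership can fail only when $\depth B$ strictly exceeds both $\depth A$ and $\depth C$, and tracing the same three inequalities shows that this forces the single rigid configuration $\depth S/(I:f) = \depth S/(I,f) + 1$ together with $\depth S/I > \depth S/(I:f)$.

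The crux, and the step I expect to be the main obstacle, is to exclude this configuration; it genuinely can occur for an abstract short exact sequence, so the argument must exploit that the maps come from multiplication by the monomial $f$. I would pass to the long exact sequence in local cohomology, where the failure is equivalent to the connecting homomorphism $H^{c}_{\m}(C) \to H^{c+1}_{\m}(A)$, with $c = \depth C$, being an isomorphism of graded modules once the degree shift by $\deg f \ge 1$ induced by multiplication by $f$ is accounted for. To analyze it I would first perform a simultaneous polarization, which is compatible with forming $(I:f)$ and $(I,f)$ and shifts all the depths involved by the same constant, so that the membership is preserved while one may assume $I$ and $f$ squarefree; then both $S/(I:f)$ and $S/(I,f)$ are Stanley--Reisner rings, of the link and of the deletion of the face supported on $f$. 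Via Hochster's formula the connecting map becomes a boundary map in the reduced simplicial homology long exact sequence relating the link, the deletion, and the full complex $\D$, and the plan is to show that the strictly positive shift $\deg f$ misaligns the homological degrees so that this boundary map cannot be an isomorphism in the critical degree, contradicting the configuration isolated above.
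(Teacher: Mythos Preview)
The paper does not prove this lemma; it simply quotes \cite[Theorem 4.3]{CHHKTT}. Your short exact sequence and depth-lemma analysis are correct, and you have correctly isolated the single obstruction: the configuration $\depth S/(I:f)=\depth S/(I,f)+1$ with $\depth S/I$ strictly larger than both.

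Your plan to rule this configuration out, however, has real gaps and is far more elaborate than needed. First, polarization is not compatible with colon in the naive way you assert: take $I=(x^2,xy)$ and $f=x$, so that $I:f=(x,y)$; polarizing $I$ to $(x_1x_2,x_1y)\subset\k[x_1,x_2,y]$ and then taking the colon by $x_1$ gives $(x_2,y)$, which is not the polarization of $(x,y)$ in any consistent ambient ring, so ``simultaneous polarization'' requires a careful construction you have not supplied. Second, the final step---that the degree shift by $\deg f$ ``misaligns the homological degrees'' in Hochster's formula and thereby prevents the connecting map from being an isomorphism---is not an argument but a hope; local cohomology of a Stanley--Reisner ring lives in many multidegrees, and a positive shift does not by itself obstruct an isomorphism. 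More to the point, none of this machinery is necessary: the obstruction is excluded in one line by Lemma~\ref{lem_upperbound} (stated immediately before the lemma you are proving), which gives $\depth S/I \le \depth S/(I:f)$ whenever $f\notin I$. This directly forbids $\depth S/I > \depth S/(I:f)$, and combined with your depth-lemma inequalities yields the result: if $\depth S/I < \depth S/(I:f)$ then the first and third inequalities force $\depth S/I = \depth S/(I,f)$, and otherwise $\depth S/I = \depth S/(I:f)$. The case $f\in I$ is trivial since then $(I,f)=I$. This is essentially the argument behind \cite[Theorem 4.3]{CHHKTT}.
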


In the ideals of the form $I+(f)$ and $I:f$, some variables will be part of the minimal generators, and some will not appear in any of the minimal generators. A variable that does not divide any minimal generators of a monomial ideal $J$ will be called a free variable of $J$. We have 
\begin{lem}\label{lem_variables_clearing} Assume that $I = J + (x_a, \ldots, x_b)$ and $x_{b+1}, \ldots, x_n$ are free variables of $I$ where $J$ is a monomial ideal in $R = \k[x_1,\ldots,x_{a-1}]$. Then
$$\depth S/I = \depth R/J + (n-b).$$    
\end{lem}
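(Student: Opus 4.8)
The plan is to identify $S/I$ explicitly as a polynomial ring over $R/J$ and then invoke the standard behaviour of depth under a polynomial extension. If $J = R$ then $R/J = 0 = S/I$ and the equality is vacuous under the usual convention $\depth 0 = +\infty$, so I will assume $J \neq R$ from now on.

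First I would unwind the definitions. Writing $S = R[x_a, x_{a+1}, \ldots, x_n]$ and using that $J$ is an ideal of $R = \k[x_1,\ldots,x_{a-1}]$, we get
$$S/I = S/\bigl(JS + (x_a, \ldots, x_b)\bigr) \cong \bigl(S/(x_a,\ldots,x_b)\bigr)\big/\bigl(J\cdot S/(x_a,\ldots,x_b)\bigr) \cong (R/J)[x_{b+1}, \ldots, x_n],$$
an isomorphism of standard graded $\k$-algebras: quotienting $S$ by the variables $x_a,\ldots,x_b$ leaves the polynomial ring $R[x_{b+1},\ldots,x_n]$, and the image of $I$ there is exactly the extension of $J$, since $x_{b+1},\ldots,x_n$ are free variables of $I$.

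Next I would recall the standard fact that for a finitely generated graded module $M$ over a finitely generated graded $\k$-algebra $A$ (with irrelevant maximal ideal $\m_A$), one has $\depth_{A[x]} M[x] = \depth_A M + 1$, depth being taken with respect to $\m_A$ on the left and $\m_A A[x] + (x)$ on the right; see \cite{BH}, or argue directly that a maximal $M$-regular sequence of forms in $\m_A$, followed by $x$, is a maximal $M[x]$-regular sequence. Applying this $n-b$ times to $A = R/J$ gives $\depth (R/J)[x_{b+1},\ldots,x_n] = \depth R/J + (n-b)$, and combining with the isomorphism above yields $\depth S/I = \depth R/J + (n-b)$.

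The computation is entirely routine; the only points requiring care are the bookkeeping in the identification of $S/I$ (keeping straight the cleared variables $x_a,\ldots,x_b$ versus the free variables $x_{b+1},\ldots,x_n$) and the degenerate case $J = R$. As an alternative I would note that one can bypass the polynomial-extension lemma: since $S/I$ and $R/J$ have finite projective dimension, base changing a minimal free resolution of $R/J$ along $R \to S/(x_a,\ldots,x_b)$ and tensoring with the Koszul complex on the regular sequence $x_a,\ldots,x_b$ shows $\pd_S S/I = \pd_R R/J + (b-a+1)$, whence the claim follows from the Auslander--Buchsbaum formula after the arithmetic $n - \pd_S S/I = (n-b) + \bigl((a-1) - \pd_R R/J\bigr)$. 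I would present the first argument as the main line.
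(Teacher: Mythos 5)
Your proposal is correct. The paper states this lemma without proof, treating it as a standard fact, and your argument --- identifying $S/I \cong (R/J)[x_{b+1},\ldots,x_n]$ after killing the regular sequence $x_a,\ldots,x_b$, then applying the standard behaviour of depth under adjoining variables (or, equivalently, Auslander--Buchsbaum with the Koszul complex) --- is exactly the canonical justification the authors are implicitly relying on.
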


\subsection{Graphs and their edge ideals}\label{subsection_graphs} 

Let $G$ denote a finite simple graph over the vertex set $V(G) = \{x_1,\ldots,x_n\}$ and the edge set $E(G)$. For a vertex $x\in V(G)$, let the neighbours of $x$ be the subset $N_G(x)=\{y\in V(G) \mid \{x,y\}\in E(G)\}$. The closed neighbourhood of $x$ is $N_G[x]=N_G(x)\cup\{x\}$. A vertex $x$ is called a leaf if it has a unique neighbour. An edge that contains a leaf is called a leaf edge. For a subset $U \subset V(G)$, $N_G(U) = \cup (N_G(x) \mid x \in U)$ and $N_G[U] = \cup (N_G[x] \mid x \in U)$. When it is clear from the context, we drop the subscript $G$ from the notation $N_G$.

Let $\e = \{e_1, \ldots, e_t\}$ be a set of $t$ distinct edges of $G$. We denote by $N[\e]$ the closed neighbourhood of $\e$ in $G$,
$$N[\e] = \cup (N[x] \mid x \text{ is a vertex of } e_j \text{ for some } j = 1, \ldots, t).$$
Furthermore, $G[\e]$ denotes the induced subgraph of $G$ on $N[\e]$ and $G[\bar {\e}]$ denotes the induced subgraph of $G$ on $V(G) \setminus N[\e]$.

A graph $H$ is called a subgraph of $G$ if $V(H) \subseteq V(G)$ and $E(H) \subseteq E(G)$. 

Let $U \subset V(G)$ be a subset of vertices of $G$. The induced subgraph of $G$ on $U$, denoted by $G[U]$, is the graph such that $V(G[U]) = U$ and for any vertices $u,v \in U$, $\{u,v\} \in E(G[U])$ if and only if $\{u,v\} \in E(G)$.

A set $\e = \{e_1, \ldots, e_t\}$ of $t$ distinct edges of $G$ is an induced matching if $e_i \cap e_j = \emptyset$ for all $i \neq j \in \{1, \ldots, t\}$ and $\e$ is an induced subgraph of $G$.

A tree is a connected acyclic graph. A cycle of length $m$ in $G$ is a sequence of distinct vertices $x_1,\ldots,x_m$ such that $\{x_1,x_2\}, \ldots, \{x_{m-1}x_m\} , \{x_m,x_1\}$ are edges of $G$.

A graph $G$ is bipartite if its vertex set has a decomposition $V(G) = U \cup V$ such that $E(G) \subset U \times V$. It is called a complete bipartite graph if $E(G) = U \times V$, denoted by $K_{U, V}$.

A graph $G$ is called weakly chordal if $G$ and its complement do not contain an induced cycle of length at least $5$.

The edge ideal of $G$ is defined to be
$$I(G)=(x_ix_j \mid \{x_i,x_j\}\in E(G))\subseteq S.$$
A graph $G$ is called Cohen-Macaulay if $I(G)$ is Cohen-Macaulay. For simplicity, we often write $x_i \in G$ (resp. $x_ix_j \in G$) instead of $x_i \in V(G)$ (resp. $\{x_i,x_j\} \in E(G)$). By abuse of notation, we also call $x_i x_j \in I(G)$ an edge of $G$.

We have the following result \cite[Lemma 2.10]{Mo}.

\begin{lem}\label{lem_colon_leaf} Suppose that $G$ is a graph and $xy$ is a leaf edge of $G$. Then for all $t \ge 2$, we have
$$I(G)^t : (xy) = I(G)^{t-1}.$$    
\end{lem}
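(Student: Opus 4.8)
The plan is to prove the identity directly at the level of monomials, since both $I(G)^t : (xy)$ and $I(G)^{t-1}$ are monomial ideals, so it suffices to compare their monomial members. Write $I = I(G)$. After relabelling we may assume $y$ is the leaf, so that $xy$ is the \emph{only} edge of $G$ incident to $y$; this single structural fact drives the whole argument. The inclusion $I^{t-1} \subseteq I^t : (xy)$ is immediate, since $xy \in I$ gives $xy \cdot I^{t-1} \subseteq I^t$. The content is the reverse inclusion.

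For the reverse inclusion I would take a monomial $f$ with $fxy \in I^t$ and show $f \in I^{t-1}$. Membership in $I^t$ means $fxy$ is divisible by a product $e_1 \cdots e_t$ of $t$ edges of $G$ (repetitions allowed), so the task becomes a bookkeeping argument on this product. The key observation, using that $y$ is a leaf, is that every $e_i$ divisible by $y$ must equal $xy$. This splits the analysis according to whether the product $e_1 \cdots e_t$ uses the variable $y$ at all.

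If some $e_i$ is divisible by $y$, then $e_i = xy$; after relabelling $e_t = xy$, cancelling the factor $xy$ from $e_1 \cdots e_t \mid fxy$ yields $e_1 \cdots e_{t-1} \mid f$, so $f \in I^{t-1}$. The main obstacle is the opposite case, in which no $e_i$ involves $y$: here one cannot simply strip off an $xy$, and this is where the leaf hypothesis must be exploited a second time (note that without it the analogous statement $I^t : (x) = I^{t-1}$ is false). Since $y$ now divides none of the $e_i$, the factor $y$ is inert and $e_1 \cdots e_t$ already divides $fx$; one then performs a secondary case split on the variable $x$. If some $e_i = xz$ is divisible by $x$, dividing $e_1 \cdots e_t \mid fx$ by $x$ leaves $e_1 \cdots e_{t-1}\, z \mid f$, and since $e_1 \cdots e_{t-1}$ is a product of $t-1$ edges this forces $f \in I^{t-1}$; the point is that the edge $xz$ can be ``traded'' for the standalone vertex $z$. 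If instead no $e_i$ is divisible by $x$, then $e_1 \cdots e_t$ divides $f$ outright, whence $f \in I^t \subseteq I^{t-1}$. In every case $f \in I^{t-1}$, which establishes $I^t : (xy) \subseteq I^{t-1}$ and completes the proof. Throughout, the hypothesis $t \ge 2$ is harmless: the same argument works verbatim for every $t \ge 1$.
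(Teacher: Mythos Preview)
Your proof is correct. The paper does not supply its own argument for this lemma; it simply cites Morey \cite[Lemma 2.10]{Mo}, so there is no in-paper proof to compare against. Your direct monomial-by-monomial verification is exactly the standard argument (and is essentially how Morey proves it): the leaf hypothesis forces any edge containing $y$ to be $xy$, and the three-way case split on whether the witnessing product $e_1\cdots e_t$ involves $y$, involves $x$ but not $y$, or involves neither, cleanly exhausts the possibilities. One tiny remark: in Subcase 2a you could shorten the argument by noting that $e_1\cdots e_{t-1}\mid f$ already suffices (you do not actually need the extra factor $z$), but what you wrote is correct as stated.
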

As a consequence, we have the following well-known result.

\begin{lem}\label{lem_depth_power_non_increasing} Let $G$ be a graph. Assume that $G$ has a leaf edge. Then the sequence $\depth S/I(G)^t$ is non-increasing.    
\end{lem}
\begin{proof}
    Let $xy$ be a leaf edge of $G$. By Lemma \ref{lem_colon_leaf} and Lemma \ref{lem_upperbound}, we have 
    $$\depth S/I(G)^t \le \depth S/(I(G)^t : xy) = \depth S/I(G)^{t-1},$$
    for all $t \ge 2$. The conclusion follows.
\end{proof}

\subsection{Bipartite completion and a colon ideal} Let $H$ be a connected bipartite graph with the partition $V(H) = U \cup V$. The bipartite completion of $H$, denoted by $\widetilde {H}$, is the complete bipartite graph $K_{U, V}$. We have

\begin{lem}\label{lem_colon_product_edges} Let $\e = \{e_1, \ldots, e_t\}$ be a set of $t$ distinct edges of $G$. Assume that $\e$, viewed as a subgraph of $G$, is connected and $G[\e]$ is bipartite. Then 
$$I(G)^{t+1}:(e_1 \cdots e_t) = I(H),$$
where $H$ is a graph obtained from $G$ by bipartite completing $G[\e]$, i.e., $V(H) = V(G)$ and $E(H) = E(G) \cup E(\widetilde {G[\e]})$.    
\end{lem}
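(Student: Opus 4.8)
The plan is to prove the two inclusions $I(H)\subseteq I(G)^{t+1}:(e_1\cdots e_t)$ and $I(G)^{t+1}:(e_1\cdots e_t)\subseteq I(H)$ separately, using the following structural remark throughout: since $\e$ is connected, every vertex of $N[\e]$ either lies in $V(\e)$ or is adjacent to a vertex of $V(\e)$, so $G[\e]$ is connected; being bipartite, it then has a unique bipartition $N[\e]=A\sqcup B$, and $\widetilde{G[\e]}=K_{A,B}$. Consequently $G$ has no edge with both ends in $A$ and none with both ends in $B$, and the edges of $H$ lying inside $N[\e]$ are exactly the $A$--$B$ pairs.

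For the inclusion $\supseteq$: because $I(G)\subseteq I(H)$ and $f\cdot e_1\cdots e_t\in I(G)^{t+1}$ for every edge $f$ of $G$, it is enough to show $ab\cdot e_1\cdots e_t\in I(G)^{t+1}$ for each new edge $ab$ of $\widetilde{G[\e]}$, with $a\in A$ and $b\in B$. The heart of the matter is to construct a walk $a=p_0,p_1,\dots,p_{2s+1}=b$ in $G$ of odd length whose even-position edges $p_1p_2,p_3p_4,\dots,p_{2s-1}p_{2s}$ form $s$ distinct edges of $\e$ and whose odd-position edges $p_0p_1,p_2p_3,\dots,p_{2s}p_{2s+1}$ are edges of $G$. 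I would obtain it by letting $w_a$ be $a$ if $a\in V(\e)$ and a neighbour of $a$ in $V(\e)$ otherwise, defining $w_b$ similarly, taking a path in $\e$ from $w_a$ to $w_b$, and, when needed, prepending the edge $a w_a$ and/or appending $w_b b$; bipartiteness of $G[\e]$ makes the length come out odd in each of the four cases according to whether $a,b\in V(\e)$, and also forces $w_a\neq w_b$. Granting the walk, one has the monomial identity
$$ab\cdot(p_1p_2)(p_3p_4)\cdots(p_{2s-1}p_{2s})\;=\;p_0p_1\cdots p_{2s+1}\;=\;(p_0p_1)(p_2p_3)\cdots(p_{2s}p_{2s+1}),$$
whose right-hand side displays this monomial as a product of $s+1$ edges of $G$; hence $ab\cdot(p_1p_2)\cdots(p_{2s-1}p_{2s})\in I(G)^{s+1}$, and since the $s$ distinct edges $p_1p_2,\dots,p_{2s-1}p_{2s}$ occur among $e_1,\dots,e_t$, multiplying by the product of the remaining $t-s$ edges of $\e$ yields $ab\cdot e_1\cdots e_t\in I(G)^{s+1}\cdot I(G)^{t-s}=I(G)^{t+1}$.

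For the inclusion $\subseteq$: let $m$ be a monomial with $me_1\cdots e_t\in I(G)^{t+1}$ and suppose, for contradiction, that $m\notin I(H)$. Then $\supp(m)$ is independent in $H$, so no edge of $G$ has both ends in $\supp(m)$; and since $H$ restricted to $N[\e]$ is $K_{A,B}$, after possibly swapping $A$ and $B$ we may assume $\supp(m)\cap N[\e]\subseteq A$, so in particular $\supp(m)\cap B=\emptyset$. As $me_1\cdots e_t\in I(G)^{t+1}$, it is divisible by a product $f_1\cdots f_{t+1}$ of $t+1$ edges of $G$, and every endpoint of every $f_i$ lies in $\supp(me_1\cdots e_t)=\supp(m)\cup V(\e)$. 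I claim each $f_i$ is an edge of $G[\e]$: if $f_i$ had an endpoint $q\notin N[\e]$ then $q\in\supp(m)$ (as $V(\e)\subseteq N[\e]$), while the other endpoint of $f_i$ lies in $\supp(m)\cup V(\e)$ --- if in $V(\e)$ then $q$ is adjacent to a vertex of $V(\e)$, hence $q\in N[\e]$, a contradiction; if in $\supp(m)$ then $f_i$ is an edge of $G$ with both ends in $\supp(m)$, again a contradiction. Thus all $f_i\in E(G[\e])$, so each $f_i$ has exactly one endpoint in $B$. Writing $\deg_B(\mu)$ for the sum of the exponents of the $B$-variables in a monomial $\mu$, and noting that each $e_i$ likewise has exactly one endpoint in $B$ whereas $\supp(m)\cap B=\emptyset$, we obtain
$$t+1=\sum_{i=1}^{t+1}\deg_B(f_i)=\deg_B(f_1\cdots f_{t+1})\le\deg_B(me_1\cdots e_t)=\deg_B(m)+t=t,$$
which is absurd. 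Hence $m\in I(H)$, completing the proof.

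I expect the construction of the walk in the $\supseteq$ direction to be the only genuinely delicate step: one must carry out the four-case analysis (whether $a\in V(\e)$, whether $b\in V(\e)$), check in each case that prepending and/or appending at most one neighbour edge to a path in $\e$ gives an odd-length walk whose even-position edges are distinct elements of $\e$, and verify the non-degeneracy $w_a\neq w_b$, all of which rest on the unique bipartition of the connected bipartite graph $G[\e]$. The monomial identity in the $\supseteq$ direction and the degree count in the $\subseteq$ direction are then routine bookkeeping.
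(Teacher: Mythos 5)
Your proof is correct, but it takes a genuinely different route from the paper. The paper argues by induction on $t$: it peels off an edge $e_t$ keeping $\e\setminus e_t$ connected, applies the induction hypothesis to get $I(G)^t:(e_1\cdots e_{t-1})=I(H')$ with $H'$ the bipartite completion along $\e'$, and then invokes Banerjee's theorem on $I(G)^{t+1}:(e_1\cdots e_t)\supseteq I(H')^2:e_t$ together with the even-connectedness description of the colon ideal to identify the result with $I(H)$; both inclusions in each step are thus outsourced to \cite[Theorem 6.7]{B}. You instead prove the two inclusions directly and self-containedly: the inclusion $I(H)\subseteq I(G)^{t+1}:(e_1\cdots e_t)$ via an explicit odd walk through $\e$ whose alternate edges are distinct members of $\e$ (this is exactly Banerjee's even-connection mechanism, reproved from scratch, and your four-case construction and the parity/non-degeneracy checks via the unique bipartition of the connected graph $G[\e]$ are all sound), and the reverse inclusion via the observation that if $\supp(m)$ is independent in $H$ then, after normalizing $\supp(m)\cap N[\e]\subseteq A$, every edge factor of $m e_1\cdots e_t$ must lie in $G[\e]$ and hence contributes exactly one $B$-vertex, so a $B$-degree count gives $t+1\le t$. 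The paper's argument is shorter because it leans on the literature; yours buys a transparent, citation-free proof whose only inputs are the connectedness of $G[\e]$ and its bipartition, and it makes visible exactly where the hypotheses ``$\e$ connected'' and ``$G[\e]$ bipartite'' are used. Both are valid proofs of the lemma.
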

\begin{proof} We prove by induction on $t$. The base case $t = 1$ follows from \cite[Theorem 6.7]{B}. Now, assume that the statement holds for $t-1$. There always exists an edge $e_j \in \{e_1, \ldots, e_t\}$ so that $\e \setminus e_j$ is connected. We may assume that $j = t$. Let $\e' = \{e_1, \ldots, e_{t-1}\}$. By induction, we have 
\begin{equation}
    I(G)^t : (e_1 \cdots e_{t-1}) = I(H'),
\end{equation}
where $E(H') = E(G) \cup E(\widetilde{G[\e']})$.

By \cite[Theorem 6.7]{B}, $I(G)^{t+1} : (e_1 \cdots e_{t}) \supseteq I(H')^2 : e_t$ (see also \cite[Lemma 2.9]{MTV1}). Let $N_G[\e] = U \cup V$ be the partition of $N_G[\e]$. Assume that $e_t = uv$ with $u\in U$ and $v\in V$. Since $\e$ is connected, $\supp \e' \cap \{u,v\} \neq \emptyset$. We may assume that $u \in \supp \e'$. Hence,
\begin{equation}
    v \in N_G[\e'] \text{ and } N_G[\e] = N_G[\e'] \cup N_G(v).
\end{equation} 
In particular, $N_G[\e'] = U' \cup V$ and $U = U' \cup N_G(v)$. Since the induced subgraph of $H'$ on $N_G[\e']$ is the complete bipartite graph $K_{U',V}$, we have $N_{H'}(u) = V$ and $N_{H'}(v) = U$. Thus, $I(H')^2:e_t \supseteq I(K_{U,V})$. The conclusion follows from \cite[Theorem 6.7]{B} as any new edge of $H$ must have support in $N[\e]$.
\end{proof}
\begin{rem}
    The notion of bipartite completion of a bipartite subgraph of a simple graph $G$ was introduced and studied in \cite{MTV2}. It plays an important role in the study of depth of symbolic powers of $I(G)$.
\end{rem}

\subsection{Projective dimension of edge ideals of weakly chordal graphs}
We note that the colon ideals of powers of edge ideals of trees by products of edges are edge ideals of weakly chordal graphs. Their projective dimension can be computed via the notion of strongly disjoint families of complete bipartite subgraphs, introduced by Kimura \cite{K}. For a graph G, we consider all families of (non-induced) subgraphs $B_1, \ldots, B_g$ of $G$ such that
\begin{enumerate}
    \item each $B_i$ is a complete bipartite graph for $1 \le i \le g$,
    \item the graphs $B_1, \ldots, B_g$ have pairwise disjoint vertex sets,
    \item there exist an induced matching $e_1,\ldots, e_g$ of $G$ for each $e_i \in E(B_i)$ for $1\le i \le g$.
\end{enumerate}
Such a family is termed a strongly disjoint family of complete bipartite subgraphs. We define 
$$ d(G) = \max ( \sum_1^g |V (B_i)| - g),$$
where the maximum is taken over all the strongly disjoint families of complete bipartite subgraphs $B_1, \ldots, B_g$ of $G$. We have the following \cite[Theorem 7.7]{NV1}.

\begin{thm}\label{thm_pd_weakly_chordal}
    Let $G$ be a weakly chordal graph with at least one edge. Then 
    $$\pd (S/I(G)) = d(G).$$
\end{thm}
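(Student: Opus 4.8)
The plan is to establish the two inequalities $\pd(S/I(G)) \ge d(G)$ and $\pd(S/I(G)) \le d(G)$ separately, passing freely between depth and projective dimension through the Auslander--Buchsbaum formula $\pd(S/I(G)) = n - \depth(S/I(G))$, so that Lemmas \ref{lem_upperbound}, \ref{lem_depth_colon} and \ref{lem_variables_clearing} become statements about $\pd$. The lower bound $\pd(S/I(G)) \ge d(G)$ requires no chordality hypothesis: given a strongly disjoint family $B_1, \ldots, B_g$ realizing $d(G)$, Kimura's non-vanishing theorem \cite{K} produces a nonzero multigraded Betti number of $S/I(G)$ in homological degree $\sum_{i=1}^{g}(|V(B_i)| - 1) = \sum_{i=1}^{g}|V(B_i)| - g$, supported on $\bigcup_i V(B_i)$, which forces $\pd(S/I(G)) \ge d(G)$. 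I would quote this directly rather than reprove it, and adopt the convention $d(G) = 0$ when $G$ has no edges, so that the target equality and the induction below stay valid at the base.

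For the upper bound I would induct on $|V(G)|$. Applying Lemma \ref{lem_depth_colon} with $f = x$ a vertex of $G$, the quantity $\pd(S/I(G))$ equals either $\pd(S/(I(G):x))$ or $\pd(S/(I(G),x))$. Here $(I(G):x) = (x_c \mid c \in N(x)) + I(G\setminus N[x])$ with $x$ a free variable, so Lemma \ref{lem_variables_clearing} gives $\pd(S/(I(G):x)) = |N(x)| + \pd(R/I(G\setminus N[x]))$, while $(I(G),x)$ contributes one extra variable to the ideal, giving $\pd(S/(I(G),x)) = 1 + \pd(R'/I(G\setminus x))$. Since $G\setminus N[x]$ and $G\setminus x$ are induced subgraphs of $G$ they are again weakly chordal, so by induction the right-hand sides equal $|N(x)| + d(G\setminus N[x])$ and $1 + d(G\setminus x)$. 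It therefore suffices to exhibit a vertex $x$ for which both
$$|N(x)| + d(G\setminus N[x]) \le d(G) \qquad\text{and}\qquad 1 + d(G\setminus x) \le d(G).$$

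The main obstacle is exactly these two combinatorial inequalities together with the choice of $x$, since both are false for general graphs. For the induced $5$-cycle one has $d(C_5) = 2$ yet $|N(x)| + d(C_5\setminus N[x]) = 2 + 1 = 3$ for every vertex $x$, and correspondingly $\pd(S/I(C_5)) = 3 > d(C_5)$; this is precisely why weak chordality cannot be dropped. I would thus select $x$ using the structure theory of weakly chordal graphs: they are closed under induced subgraphs and, by the theorem of Hayward--Ho\`ang--Maffray, every weakly chordal graph that is not a clique contains a two-pair, the insertion of whose edge again yields a weakly chordal graph. This is the device Woodroofe used to compute the regularity of weakly chordal edge ideals, and I expect the projective-dimension statement to run dually: choose $x$ to be an endpoint of a two-pair edge (or a vertex exposed after adding such an edge), and verify through the colon/deletion short exact sequence that a strongly disjoint family realizing $d(G)$ transports to families of $G\setminus N[x]$ and $G\setminus x$ with the correct loss of one unit in the invariant. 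Making this transport precise---proving that the maximizing families behave monotonically under the chosen deletions for weakly chordal graphs, which is what forces the two displayed inequalities---is the crux of the argument and the step I expect to be the most delicate.
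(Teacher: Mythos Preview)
The paper does not prove Theorem \ref{thm_pd_weakly_chordal}; it is quoted from \cite[Theorem 7.7]{NV1} without argument, so there is no in-paper proof to compare your proposal against.

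Judged on its own merits: the lower bound via Kimura's non-vanishing result \cite{K} is correct and standard. The inductive scheme for the upper bound through Lemma \ref{lem_depth_colon} is a reasonable framework, and you have correctly isolated the combinatorial core, namely the existence of a vertex $x$ with
\[
|N(x)| + d(G\setminus N[x]) \le d(G) \qquad\text{and}\qquad 1 + d(G\setminus x) \le d(G).
\]
However, the proposal stops precisely here: you concede that ``making this transport precise \ldots\ is the crux of the argument'' and do not carry it out. The appeal to the Hayward--Ho\`ang--Maffray two-pair theorem is not tied to either displayed inequality in any concrete way. Woodroofe's regularity argument works because the induced matching number is controlled when a two-pair edge is inserted; the invariant $d(G)$, built from strongly disjoint families of complete bipartite subgraphs together with a compatible induced matching, is a different combinatorial object, and you give no reason it should respond to two-pairs in a parallel fashion. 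In particular, the second inequality $d(G\setminus x) \le d(G)-1$ says that $x$ meets the vertex set of \emph{every} optimal family, and nothing in the two-pair property obviously forces this. As written, the proposal names the obstacle accurately but does not surmount it, so it is a plausible outline rather than a proof.
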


\section{Depth of powers of edge ideals of Cohen-Macaulay trees}\label{sec_depth_power_CM_trees}
In this section, we compute the depth of powers of Cohen-Macaulay trees. First, we prove a general bound for the depth of powers of edge ideals of graphs. Recall that for a set $\e = \{e_1, \ldots, e_t\}$ of edges of $G$, $G[\e]$ denotes the induced subgraph of $G$ on $N[\e]$ and $G[\bar {\e}]$ denotes the induced subgraph of $G$ on $V(G) \setminus N[\e]$.

\begin{lem}\label{lem_upperbound_power_tree} Let $\e = \{e_1, \ldots, e_t\}$ be a set of $t$ distinct edges of a simple graph $G$. Assume that $G[\e]$ is bipartite, $G[\bar {\e}]$ is weakly chordal, and $\e$, viewed as a subgraph of $G$, is connected. Let $R$ be the polynomial ring over the vertex set of $G[\bar{\e}]$. Then 
$$\depth S/I(G)^{t+1}  \le 1 + \depth R/I(G[\bar {\e}]).$$    
\end{lem}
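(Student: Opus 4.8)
The plan is to combine Lemma \ref{lem_colon_product_edges} with Lemma \ref{lem_upperbound}, so that the bound on $\depth S/I(G)^{t+1}$ is reduced to a statement about $\depth S/I(H)$, where $H$ is the bipartite completion of $G$ along $G[\e]$. First I would observe that the product $f = e_1 \cdots e_t$ is not in $I(G)^{t+1}$ (since each minimal generator of $I(G)^{t+1}$ is a product of $t+1$ edges, and no such product divides a product of $t$ edges — one should note this needs a small argument, e.g. comparing total degrees, so $\deg f = 2t < 2(t+1)$). By Lemma \ref{lem_upperbound}, $\depth S/I(G)^{t+1} \le \depth S/(I(G)^{t+1} : f)$, and by Lemma \ref{lem_colon_product_edges} the hypotheses ($\e$ connected, $G[\e]$ bipartite) give $I(G)^{t+1} : f = I(H)$ with $E(H) = E(G) \cup E(\widetilde{G[\e]})$. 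So it remains to bound $\depth S/I(H)$.

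Next I would analyze the structure of $H$. The key point is that the induced subgraph of $H$ on $N[\e]$ is the complete bipartite graph $K_{U,V}$ (where $N_G[\e] = U \cup V$ is the partition), and crucially $N_G[\e]$ is also the closed neighbourhood of $\e$ in $H$ — adding the completion edges does not enlarge the neighbourhood since every completion edge already has both endpoints in $N_G[\e]$. Thus in $H$ we can write $I(H)$ as the sum of $I(K_{U,V})$, the edges of $G[\bar\e]$ living on the complementary vertex set $V(G) \setminus N[\e]$, and the "bridging" edges of $G$ joining a vertex of $N[\e]$ to a vertex of $V(G)\setminus N[\e]$. The decisive observation is that every vertex of $V$ is adjacent in $K_{U,V}$ to every vertex of $U$, so one can pick a single vertex $u_0 \in U$ and colon by it, or — cleaner — use the structure of complete bipartite graphs: $\depth S/I(K_{U,V} * \text{(rest)})$ should be computed by noting that $K_{U,V}$ forces, via the standard cover/polarization argument for complete bipartite edge ideals, the splitting $\depth S/I(H) = 1 + \depth R/I(G[\bar\e])$ — or at least $\le$ it. Concretely I expect to iterate Lemma \ref{lem_depth_colon} or directly use Lemma \ref{lem_variables_clearing}: colon-ing $I(H)$ by a product of vertices from $V$ (or by $u_0$) kills all of $N[\e]$ down to a set of free variables plus the part $I(G[\bar\e])$, contributing a polynomial ring in $|N[\e]|-1$ variables tensored with $R/I(G[\bar\e])$, whence depth $= |N[\e]| - 1 + \depth R/I(G[\bar\e])$, while the $(I(H), u_0)$ branch only lowers depth — giving via Lemma \ref{lem_depth_colon} the needed upper bound $\depth S/I(H) \le 1 + \depth R/I(G[\bar\e])$ after accounting that the "$1$" absorbs one surviving variable. (The hypothesis that $G[\bar\e]$ is weakly chordal is presumably not needed for \emph{this} lemma's inequality but is carried along because the matching upper bound for the main theorem will invoke Theorem \ref{thm_pd_weakly_chordal} to evaluate $\depth R/I(G[\bar\e])$ via $d(G[\bar\e])$.)

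The main obstacle I anticipate is getting the bookkeeping of free variables exactly right: after taking the colon $I(G)^{t+1}:f = I(H)$, the ideal $I(H)$ lives in the big ring $S$ whose variables are all of $V(G)$, but $I(H)$ may still involve bridging edges from $N[\e]$ to $V(G)\setminus N[\e]$, so $V(G)\setminus N[\e]$ vertices are not immediately free. The fix is to colon further by an appropriate vertex of $V$ (which is dominated by all of $U$ in $K_{U,V}$, hence its colon removes every bridging edge incident to $V$ as well as collapses $K_{U,V}$); one must check this colon operation lands exactly on $I(G[\bar\e])$ in the ring $R$ plus a block of free variables, and that the complementary branch $(I(H), v)$ in Lemma \ref{lem_depth_colon} has depth no larger — the latter by another application of Lemma \ref{lem_upperbound} / Lemma \ref{lem_depth_power_non_increasing}-type monotonicity, or simply by Lemma \ref{lem_upperbound} since $(I(H),v) \supseteq I(H)$ would go the wrong way, so instead one argues $\depth S/(I(H),v) \le \depth S/(I(H):v)$ via yet another colon. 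Tracking precisely which vertices become free at each colon step, and making sure the constant is $+1$ and not $+0$ or $+2$, is where the care is needed; everything else is a direct chain of the cited lemmas.
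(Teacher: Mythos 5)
Your opening reduction is exactly the paper's: $f=e_1\cdots e_t\notin I(G)^{t+1}$ for degree reasons, Lemma \ref{lem_upperbound} gives $\depth S/I(G)^{t+1}\le\depth S/(I(G)^{t+1}:f)$, and Lemma \ref{lem_colon_product_edges} identifies this colon with $I(H)$, $E(H)=E(G)\cup E(\widetilde{G[\e]})$. The gap is in the second half: the bound $\depth S/I(H)\le 1+\depth R/I(G[\bar{\e}])$ cannot be reached by the colon bookkeeping you describe. Lemma \ref{lem_upperbound} only helps if you exhibit a monomial whose colon ideal has \emph{small} depth, and Lemma \ref{lem_depth_colon} requires bounding \emph{both} branches from above. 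Neither happens here. Concretely, if $v_0\in V$ is a vertex of some $e_j$, then $N_H(v_0)=U$ and $I(H):v_0=(U)+I(G[\bar{\e}])+(\text{bridging edges at }V\setminus\{v_0\})$; when there are no bridging edges, Lemma \ref{lem_variables_clearing} gives $\depth S/(I(H):v_0)=|V|+\depth R/I(G[\bar{\e}])$, which strictly exceeds the target as soon as $|V|\ge 2$. So the colon branch is not bounded by the right-hand side, Lemma \ref{lem_upperbound} applied to $v_0$ is too weak, and the asserted inequality $\depth S/(I(H),v)\le\depth S/(I(H):v)$ at the end of your fix is not a valid general principle. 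The obstruction is structural: the fact that a complete bipartite graph has depth $1$ is invisible to variable colons, since for any monomial $f\notin I(K_{U,V})$ the support of $f$ lies in one side and $I(K_{U,V}):f$ is $(U)$ or $(V)$, of depth $|V|$ or $|U|$, never $1$ unless one side is a singleton.

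The paper instead proves a \emph{lower bound on projective dimension} of $S/I(H)$, and this is where the weak chordality of $G[\bar{\e}]$ is used essentially --- contrary to your parenthetical claim that it is not needed for this lemma. By Theorem \ref{thm_pd_weakly_chordal} there is a strongly disjoint family $B_1,\ldots,B_g$ of complete bipartite subgraphs of $G[\bar{\e}]$ that \emph{achieves} $\pd(R/K)=\sum_i|V(B_i)|-g$; since $\widetilde{G[\e]}$ is vertex-disjoint from the $B_i$ and its edge $e_1$ extends the associated induced matching, $\{B_1,\ldots,B_g,\widetilde{G[\e]}\}$ is a strongly disjoint family in $H$, so Kimura's bound \cite[Theorem 1.1]{K} gives $\pd S/I(H)\ge\pd(R/K)+|N[\e]|-1$, and Auslander--Buchsbaum converts this into $\depth S/I(H)\le\depth R/K+1$. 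Any repair of your argument needs some such certificate of a nonvanishing high Betti number (or local cohomology module) for $S/I(H)$, not a chain of variable colons.
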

\begin{proof}
    Let $K = I(G[\bar{\e}])$ and $f = e_1 \cdots e_t$. By Lemma \ref{lem_colon_product_edges}, $I(G)^{t+1} : f = I(H)$ where $E(H) = E(G) \cup E(\widetilde {G[\e]})$. If $G[\bar{\e}]$ has no edges then $K$ is the zero ideal and $\depth R/K = |V(G[\bar{\e}])|$. The conclusion then follows from \cite[Theorem 1.1]{K}. Now assume that $G[\bar{\e}]$ has at least one edge. Since $G[\bar{\e}]$ is weakly chordal, by Theorem \ref{thm_pd_weakly_chordal}, there exists a family $\B = \{B_1, \ldots, B_g\}$ of strongly disjoint family of complete bipartite subgraphs of $G[\bar{\e}]$ such that $\pd (R/K) = \sum_{i=1}^g |V(B_i)| - g$. Then $\B \cup \widetilde{G[\e]}$ is a strongly disjoint family of complete bipartite subgraphs of $H$, because $e_1 \in \widetilde{G[\e]}$ together with the induced matching $e_i' \in B_i$ form an induced matching of $H$. By \cite[Theorem 1.1]{K}, 
    $$\pd S/I(H) \ge \pd (R/K) + |V(\widetilde{G[\e]})| - 1.$$ 
    By the Auslander-Buchsbaum formula, we deduce that $\depth S/I(H) \le \depth R/K + 1$. The conclusion then follows from Lemma \ref{lem_upperbound}.
\end{proof}

As a corollary, we deduce an upper bound for the depth of powers of edge ideals of Cohen-Macaulay trees. By the result of Villarreal \cite{V}, we may assume that 
$$V(G) = \{x_1, \ldots, x_d,y_1, \ldots, y_d\} \text{ and }  E(G) = E(T) \cup \{\{x_1,y_1\},\ldots, \{x_d,y_d\}\},$$ where $T$ is the induced subgraph of $G$ on $\{x_1, \ldots, x_d\}$ which is a tree on $d$ vertices.

\begin{cor}\label{cor_upperbound_CM_tree} Let $G$ be a Cohen-Macaulay tree of dimension $d$. Then for any $t$ such that $2 \le t \le d-2$,
$$\depth S/I(G)^{t+1} \le d-t.$$    
\end{cor}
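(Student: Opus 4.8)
The plan is to apply Lemma~\ref{lem_upperbound_power_tree} with a well-chosen set of edges $\e$. Since $G$ is the whisker graph of a tree $T$ on vertices $\{x_1,\ldots,x_d\}$, I would pick a connected subtree of $T$ with exactly $t$ edges, say $\e = \{e_1,\ldots,e_t\}$ with all $e_i \in E(T)$ and $\e$ connected as a subgraph of $T$; such a subtree exists since $t \le d-2 < d-1 = |E(T)|$. Viewed inside $G$, this $\e$ is connected, and $G[\e]$, being an induced subgraph of the bipartite graph $G$ (a tree is bipartite), is bipartite. So the first two hypotheses of the lemma are immediate, and it remains to understand $G[\bar\e]$ and show it is weakly chordal with small enough depth.

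The key point is to identify $G[\bar\e]$. The closed neighbourhood $N[\e]$ consists of: the $t+1$ vertices $x_i$ spanned by the subtree $\e$; the corresponding whisker vertices $y_i$; all $T$-neighbours of the $x_i$ in $\e$; and the whisker vertices attached to those extra neighbours. Removing all of this, the remaining vertex set $V(G)\setminus N[\e]$ carries the induced subgraph $G[\bar\e]$. I claim this is again (a disjoint union of) whisker graphs of forests: every surviving $x_j$ not in $\e$ and not adjacent to $\e$ still has its whisker $y_j$ present (since $y_j \in N[\e]$ only if $x_j \in N[\e]$), so $G[\bar\e]$ is the whisker graph of the forest $T' = T[\{x_j : x_j \notin N[\e]\}]$. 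A whisker graph of a forest is chordal — more precisely it has a simplicial-type structure — hence weakly chordal; in any case it is a forest with a whisker at every vertex of the underlying forest, so in particular it is weakly chordal. Thus the hypotheses of Lemma~\ref{lem_upperbound_power_tree} are satisfied.

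It then remains to bound $\depth R/I(G[\bar\e])$, where $R$ is the polynomial ring on $V(G[\bar\e])$. Since $G[\bar\e]$ is the whisker graph of the forest $T'$, it is Cohen--Macaulay of dimension equal to the number of vertices of $T'$, so $\depth R/I(G[\bar\e]) = |V(T')|$. Counting: $T$ has $d$ vertices, and $N[\e]\cap\{x_1,\ldots,x_d\}$ contains the $t+1$ vertices of the subtree $\e$ together with at least one further vertex (a $T$-neighbour of $\e$, which exists because $t+1 < d$ forces the subtree $\e$ to have a vertex adjacent to something outside it). Hence $|V(T')| = d - |N[\e]\cap\{x_1,\ldots,x_d\}| \le d - (t+2) = d-t-2$, and so by Lemma~\ref{lem_upperbound_power_tree},
$$\depth S/I(G)^{t+1} \le 1 + \depth R/I(G[\bar\e]) \le 1 + (d-t-2) = d-t-1 \le d-t.$$
Actually one only needs the weaker count $|N[\e]\cap\{x_1,\ldots,x_d\}| \ge t+1$ to get the stated bound $d-t$ directly, since then $\depth R/I(G[\bar\e]) \le d-t-1$.

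The main obstacle I anticipate is the bookkeeping in identifying $G[\bar\e]$ precisely as a whisker graph — one must check carefully that no whisker vertex $y_j$ survives without its partner $x_j$ and vice versa, and handle the degenerate case where $T' = T[\{x_j : x_j \notin N[\e]\}]$ is empty (i.e. $G[\bar\e]$ has no edges), which is exactly the case already singled out in the proof of Lemma~\ref{lem_upperbound_power_tree}; in that situation $\depth R/I(G[\bar\e]) = |V(G[\bar\e])|$ and one must check this count is still $\le d-t-1$, which follows from the same vertex count since then $V(G[\bar\e]) = \emptyset$ or consists only of isolated $x_j$'s whose whiskers were eaten — a case needing a slightly different but equally elementary tally. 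The rest is a direct application of the already-established lemma.
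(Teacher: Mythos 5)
Your approach is exactly the paper's: apply Lemma~\ref{lem_upperbound_power_tree} to a connected subtree of $T$ with $t$ edges. However, the bookkeeping you flagged as the main obstacle is precisely where the argument goes wrong. By definition, $N[\e]$ is the union of the closed neighbourhoods of the \emph{vertices of the edges $e_j$}, i.e., of the $t+1$ vertices spanned by the subtree. Hence $N[\e]$ contains the whiskers $y_i$ of those $t+1$ vertices, but it does \emph{not} contain the whiskers of the ``extra neighbours'': if $x_j$ is a $T$-neighbour of the subtree but not a vertex of it, then the unique neighbour of $y_j$ is $x_j$, which is not a vertex of any $e_i$, so $y_j \notin N[\e]$. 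Consequently $G[\bar{\e}]$ is not the whisker graph of $T'$; it is that whisker graph together with one isolated vertex $y_j$ for each such boundary vertex $x_j$. These isolated vertices are free variables, and each adds $1$ to $\depth R/I(G[\bar{\e}])$. Writing $a = |N[\e]\cap\{x_1,\ldots,x_d\}|$, the correct value is $\depth R/I(G[\bar{\e}]) = (d-a) + (a-t-1) = d-t-1$, not $|V(T')| = d-a \le d-t-2$. Your claimed conclusion $\depth S/I(G)^{t+1} \le d-t-1$ is therefore false: it would contradict Theorem~\ref{depth_power_CM_tree}, which gives equality with $d-t$.

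The good news is that the corrected count delivers exactly what the corollary asserts, namely $\depth S/I(G)^{t+1} \le 1 + (d-t-1) = d-t$, and this is precisely the computation in the paper's proof (where the surviving $y_{t+2},\ldots,y_a$ are explicitly recorded as isolated vertices of $H_2$). So the route is right, and your other verifications --- that $\e$ is connected, that $G[\e]$ is bipartite as an induced subgraph of the bipartite graph $G$, and that $G[\bar{\e}]$, being a forest plus isolated vertices, is weakly chordal --- are fine; only the identification of $N[\e]$, and with it the value of $\depth R/I(G[\bar{\e}])$, needs to be repaired.
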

\begin{proof} Let $T_1$ be any connected subtree of $T$ with $|E(T_1)| = t$. We may assume that $V(T_1) = \{x_1,\ldots,x_{t+1}\}$ and
$$N_G[T_1] = \{x_1,\ldots,x_{t+1},\ldots,x_{t+1},\ldots,x_a,y_1,\ldots,y_{t+1}\}$$ 
for some $a \ge t+1$. Let $H_2$ be the induced subgraph of $G$ on $V(G) \setminus N_G[V(T_1)] = \{ x_{a+1},\ldots,x_d,y_{t+2},\ldots,y_d\}$. Then $H_2$ is the whisker graph of $T_2$, the induced subgraph of $T$ on $\{x_{a+1},\ldots,x_d\}$ and $\{y_{t+2},\ldots,y_{a}\}$ are isolated vertices of $H_2$. By \cite{V}, $I(H_2)$ is Cohen-Macaulay of dimension $d - t - 1$. By Lemma \ref{lem_upperbound_power_tree}, we have 
$$\depth S/I(G)^{t+1} \le 1 + \depth R/I(H_2) = d-t.$$
The conclusion follows.
\end{proof}

We now prove the following lower bound for the depth of powers of edge ideals of general whisker trees.

\begin{lem}\label{lem_lower_bound_whisker_trees} Let $T$ be a forest on $n$ vertices. Let $U \subset V(T)$ be a subset of vertices of $T$, and $G$ be the new forest obtained by adding a whisker to each vertex in $U$. Namely, $V(G) = V(T) \cup \{y_i \mid x_i \in U\}$ and $E(G) = E(T) \cup \{ \{x_i, y_i\} \mid x_i \in U\}$. Then for all $t \ge 1$, we have 
$$\depth (R/I(G)^t) \ge |U| - t +  1,$$
where $R = \k[x_1,\ldots,x_n,y_j \mid x_j \in U]$ is the polynomial ring over the variables corresponding to vertices of $G$.
\end{lem}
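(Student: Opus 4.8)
The plan is to prove Lemma~\ref{lem_lower_bound_whisker_trees} by induction on $t + |U| + n$, using the colon/sum short exact sequence machinery encoded in Lemma~\ref{lem_depth_colon} together with the leaf-edge colon identity Lemma~\ref{lem_colon_leaf}. The base case $t = 1$ is the statement that $\depth R/I(G) \ge |U|$; since $G$ is a forest with a whisker on every vertex of $U$, one sees directly that $\{x_iy_i \mid x_i \in U\}$ witnesses a well-covered structure — more concretely, $R/I(G)$ has a system of parameters of length $\ge |U|$ coming from the whiskers, and in fact $I(G)$ has projective dimension $\le n$ by a standard computation (e.g.\ via the fact that adding whiskers cannot raise $\pd$ above $n - (\text{number of whiskered components})$, or directly by induction using Lemma~\ref{lem_variables_clearing}); so $\depth R/I(G) = (n + |U|) - \pd(R/I(G)) \ge |U|$.

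For the inductive step with $t \ge 2$, I would pick a leaf edge of $G$. If $U = \emptyset$ the bound $\depth \ge 1-t$ is vacuous (the right side is $\le 0$), so assume $U \ne \emptyset$ and let $x_i \in U$ be a vertex whose whisker $y_i$ is a leaf; set $f = x_i y_i$. By Lemma~\ref{lem_depth_colon}, $\depth R/I(G)^t$ is one of $\depth R/(I(G)^t : f)$ and $\depth R/(I(G)^t, f)$, so it suffices to bound both below by $|U| - t + 1$. By Lemma~\ref{lem_colon_leaf}, $I(G)^t : f = I(G)^{t-1}$, and $y_i$ is now a free variable, so $\depth R/(I(G)^t:f) = \depth R/I(G)^{t-1} \ge (|U| - (t-1) + 1) = |U| - t + 2 > |U| - t + 1$ by the induction hypothesis applied to the same $G$ with smaller $t$. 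For the other term $(I(G)^t, f)$: modulo $f = x_iy_i$, the variable $y_i$ survives only in the monomial $x_iy_i$, and one can analyze $R/(I(G)^t, x_iy_i)$ by further colon/sum with respect to $x_i$, or more cleanly observe that $(I(G)^t, x_iy_i) = (I(G')^t \cdot(\text{stuff}), x_iy_i)$ where $G' = G \setminus y_i$ is the whisker forest on $U \setminus \{x_i\}$ (together with $T$). The cleanest route is: $\depth R/(I(G)^t, x_iy_i)$; split off $y_i$ via Lemma~\ref{lem_depth_colon} with respect to $y_i$ once more, or note $(I(G)^t : y_i) = (I(G')^t : \ldots)$; in any case we reduce to a whisker forest with $|U|$ decreased by one and apply induction to get $\ge (|U|-1) - t + 1 = |U| - t$. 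That would only give $|U| - t$, one short, so the split must instead be chosen to keep $x_i y_i$ as a \emph{generator we add} rather than delete: write $J = (I(G), x_i \cdot \text{(all neighbours)})$-type ideals so that $x_i$ becomes isolated.

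The honest and robust approach, which I expect the authors take, is therefore to induct by peeling the vertex $x_i$ itself rather than $y_i$: apply Lemma~\ref{lem_depth_colon} to $I(G)^t$ with respect to the monomial $f = $ a product of $t-1$ edges, or with respect to $x_i$, reducing to (a) a power $I(\cdot)^{t-1}$ on a whisker forest where $|U|$ has dropped by the number of whiskers destroyed — controlled so the arithmetic $|U| - t + 1$ is preserved — and (b) a genuinely smaller whisker forest for the $(I,f)$ term. The key bookkeeping point is that each unit decrease in $t$ buys a unit of depth and each removal of one whiskered vertex costs a unit, so the induction must be arranged so that in the colon term $t$ drops without $|U|$ dropping (handled above via Lemma~\ref{lem_colon_leaf}) and in the sum term $|U|$ drops by at most as much as the excess depth we already have. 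I expect the main obstacle to be exactly this matching: controlling, in the $(I(G)^t, f)$ branch, that removing the leaf edge and its endpoint from $G$ leaves a whisker forest whose $U$ has not shrunk by more than one, so that the inductive bound $|U|-1-t+1 = |U|-t$ combined with a separate gain (from $y_i$ becoming free, contributing $+1$ via Lemma~\ref{lem_variables_clearing}) recovers $|U|-t+1$. Making that "$+1$ from a freed variable" rigorous in every case split — especially when the leaf edge is an edge of $T$ rather than a whisker — is where the real work lies, and it is likely dispatched by choosing the leaf edge to always be a whisker edge when $U \ne \emptyset$, or by a secondary induction on $n$.
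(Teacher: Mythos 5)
Your proposal stalls at exactly the point you flag yourself: in the sum branch $(I(G)^t, x_iy_i)$ of Lemma~\ref{lem_depth_colon} you can only reach a whisker forest with $|U|$ reduced by one, which yields $|U|-t$, one short of the target. The rescue you sketch --- harvesting a ``$+1$ from a freed variable'' --- does not work as stated: in $(I(G)^t, x_iy_i)$ the variable $y_i$ is \emph{not} free, since $x_iy_i$ survives as a minimal generator (indeed $(I(G)^t, x_iy_i) = I(G\setminus y_i)^t + (x_iy_i)$, and the quadric $x_iy_i$ is genuinely there). So no application of Lemma~\ref{lem_variables_clearing} recovers the lost unit, and neither ``always choose a whisker leaf'' nor ``a secondary induction on $n$'' closes the argument. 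This is a genuine gap, not a detail.

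The missing idea in the paper is to \emph{strengthen the induction hypothesis}: one proves
$\depth R/(I(G)^t + I(H)) \ge |U| - t + 1$ for \emph{every} subgraph $H$ of $G$ with $E(H)$ contained in the set of whisker edges $\{x_iy_i \mid x_i \in U\}$, by induction on the triple $(n, |U|, t)$ in lexicographic order. With this formulation, adjoining a whisker edge $x_iy_i$ as a generator moves you to another instance of the \emph{same} statement with the \emph{same} $|U|$ (only $H$ grows), so no depth is lost in the sum branch; the colon branch $I(G)^t : x_iy_i = I(G)^{t-1}$ still drops $t$ and gives strictly more than needed, exactly as in your proposal. The reduction terminates at $E(H) = \{x_1y_1,\dots,x_dy_d\}$, i.e., at the ideal $I(T)^t + (x_1y_1,\dots,x_dy_d)$, and the real work is a second leaf-peeling induction on $T$ itself: take a leaf $u$ of $T$ with neighbour $v$ and apply Lemma~\ref{lem_depth_colon} to the \emph{vertices} $u$ and then $v$ (not to edges), with a case analysis according to whether $u$ and $v$ lie in $U$; the colon by $v$ either drops $t$ by one with $|U|$ fixed, or drops both $t$ and $|U|$ by one while making some $y_j$ free, and in each case the arithmetic balances. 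Your base case $t=1$ is essentially fine (a minimal vertex cover of $G$ omits one of $x_i,y_i$ for each $x_i\in U$, so $\pd \le n$ by sequential Cohen--Macaulayness of forests; the paper instead quotes a result of [MTV1]), but without the strengthened statement the inductive step cannot be completed along the lines you describe.
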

\begin{proof} By \cite[Theorem 1.1]{NV2}, we may assume that $T$ is a tree on $n$ vertices. Furthermore, we may assume that $U = \{1, \ldots, d\}$. Hence, $V(G) = \{x_1, \ldots, x_n, y_1, \ldots, y_d\}$ and $E(G) = E(T) \cup \{x_i y_i \mid i = 1, \ldots, d\}$. Since $G$ is a forest, $\depth R/I(G)^t \ge 1$ for all $t \ge 1$. Hence, the statement is vacuous when $t \ge d$. Thus, we may assume that $1 \le t \le d$. We prove by induction on the triple $(n,d,t)$ ordered by the lexicographic order the following 
\begin{equation}\label{eq_3_1}
    \depth R/ (I(G)^t + I(H)) \ge d - t + 1,
\end{equation}
for all $t \le d$ and all subgraphs $H$ of $G$ with $E(H) \subseteq \{x_1y_1, \ldots, x_dy_d\}$. 

For ease of reading, we divide the proof into several steps. 

\vspace{1mm}
\noindent {\bf Step 1.} The base case $d = 1$ is clear as $\m$, the maximal homogeneous ideal of $R$ is not an associated prime of $I(G)^t + I(H)$.

\vspace{1mm}

\noindent {\bf Step 2.} The base case $t = 1$. The statement follows from \cite[Theorem 4.1]{MTV1} as a maximal independent set of $G$ must contain either $x_i$ or $y_i$ for each $i = 1, \ldots, d$. 

\vspace{1mm}

\noindent {\bf Step 3.} Reduction to the case $E(H) = \{x_1y_1,\ldots,x_dy_d\}$. Assume that $E(H)$ is a proper subset of $\{x_1y_1, \ldots, x_dy_d\}$, say $x_dy_d \notin E(H)$. Let $J = I(G)^t + I(H)$. By Lemma \ref{lem_depth_colon}, Lemma \ref{lem_colon_leaf}, and induction, it suffices to prove that $\depth R/(J + (x_dy_d)) \ge d- t + 1$. Thus, we may assume that $E(H) = \{x_1y_1, \ldots, x_dy_d\}$. Eq. \eqref{eq_3_1} becomes
\begin{equation}\label{eq_3_2}
    \depth (R/I(T)^t + (x_1y_1,\ldots,x_dy_d)) \ge d - t + 1.
\end{equation}

\vspace{1mm}

\noindent {\bf Step 4.} Induction step. Assume that Eq. \eqref{eq_3_2} holds for all tuples $(n',d',t')$ strictly smaller than $(n,d,t)$ in the lexicographic order. Let $J = I(T)^t + (x_1y_1, \ldots, x_dy_d)$. Let $u$ be a leaf of $T$ and $v$ the unique neighbour of $u$ in $T$. There are two cases.

\vspace{1mm}

{\bf Case 1.} $u \notin \{x_1, \ldots, x_d\}$. Since $J + (u)$ is of the same form but in a smaller ring, by induction, we have $\depth R/ (J + ( u)) \ge d - t + 1$. By Lemma \ref{lem_depth_colon}, it suffices to prove that 
$$\depth R/(J:u) \ge d - t + 1.$$
But $K = J:u = v I(T)^{t-1} + (x_1y_1,\ldots,x_dy_d)$. Since $\depth R/ (K + (v)) \ge d$, by Lemma \ref{lem_depth_colon}, it suffices to prove that $\depth R/(K:v) \ge d - t + 1$. There are two subcases.

Subcase 1.a. $v \notin \{x_1, \ldots, x_d\}$. Then $K:v = I^{t-1} + (x_1y_1,\ldots, x_dy_d)$. Hence, by induction on $t$, we have $\depth R/(K:v) \ge d - (t-1) + 1$.

Subcase 1.b. $v \in \{x_1, \ldots, x_d\}$, say $v = x_d$. Then 
$$K:v = I^{t-1} + (x_1y_1, \ldots, x_{d-1}y_{d-1}) + (y_d).$$ 
Hence, by induction, we have $\depth R/(K:v) \ge (d-1) - (t-1) + 1 = d-t+1$.

\vspace{1mm}

{\bf Case 2.} $u \in \{x_1, \ldots, x_d\}$, say $u = x_d$. Let $T_1$ be the subtree of $T$ restricted to $V(T) \setminus \{u\}$. Then $J + (u) = I(T_1)^t + (x_1y_1,\ldots, x_{d-1}y_{d-1}) + (u)$. In particular, $y_d$ is a free variable. Hence, $\depth R/(J + (u)) = 1 + \depth R_1 / (I(T_1)^t + (x_1y_1,\ldots,x_{d-1}y_{d-1})),$ where $R_1 = \k[x_i,y_j \mid i,j \neq d]$. By induction, we have $\depth R/ (J + (u)) \ge 1 + (d-1) - t + 1 = d - t + 1$. By Lemma \ref{lem_depth_colon}, it suffices to prove that $\depth R/(J:u) \ge d - t + 1$. 

We have 
$$J:u = v I(T)^{t-1} + (x_1y_1,\ldots,x_{d-1}y_{d-1}) + (y_d).$$ 

Let $R_1 = \k[x_1,\ldots, x_n,y_1,\ldots,y_{d-1}]$ and $K = v I(T)^{t-1} + (x_1y_1,\ldots,x_{d-1}y_{d-1})$. Then, $\depth R/K = \depth R_1/K$. By Lemma \ref{lem_depth_colon}, it suffices to prove that $\depth R_1/K:v \ge d - t + 1$.

Again, we have two subcases,

Subcase 2.a. $v \notin \{x_1, \ldots, x_{d-1}\}$. Then $K:v = I(T)^{t-1} + (x_1y_1,\ldots,x_{d-1}y_{d-1})$. Hence, by induction, $\depth R_1/(K:v) \ge (d-1) - (t-1) + 1$. 

Subcase 2.b. $v \in \{x_1, \ldots, x_{d-1}\}$, say $v = x_{d-1}$. Then, $K:v = I(T)^{t-1} + (x_1y_1,\ldots, x_{d-2}y_{d-2}) + (y_{d-1}).$ Let $T'$ be the induced subgraph of $T$ on $V(T) \setminus \{d\}$. If $t =2$ then 
$$K:v = I(T') + (x_1y_1,\ldots,x_{d-2}y_{d-2},x_{d-1}x_d) + (y_{d-1}).$$ 
The conclusion follows from \cite[Theorem 4.1]{MTV1}. Now, assume that $t \ge 3$. Let $R_2 = \k[x_1,\ldots,x_n,y_1,\ldots,y_{d-2}]$ and $L = I(T)^{t-1} + (x_1y_1,\ldots,x_{d-2}y_{d-2})$. We need to prove that $\depth R_2/L \ge d - t + 1$. Note that $I(T) = I(T') + (x_{d-1}x_d)$ and $x_d$ can be considered as a whisker at $x_{d-1}$. By Lemma \ref{lem_depth_colon}, we have 
$$\depth R_2/L \in \{\depth (R_2/L:(x_{d-1}x_d)), \depth (R_2/ L + (x_{d-1}x_d))\}.$$
By Lemma \ref{lem_colon_leaf}, $L:x_{d-1}x_d = I(T)^{t-2} + (x_1y_1,\ldots,x_{d-2}y_{d-2})$. Hence, by induction, we have $\depth R_2/(L:x_{d-1}x_d) \ge (d-2) - (t-2) + 1 = d- t + 1$. Finally, we have $L + (x_{d-1}x_d) = I(T')^{t-1} + (x_1y_1,\ldots,x_{d-2}y_{d-2},x_{d-1}x_d)$, with $x_d$ now play the role of $y_{d-1}$. By induction, we also have $\depth R_2 / (L + (x_{d-1}x_d)) \ge d-1 - (t-1) + 1 = d-t + 1$.

That concludes the proof of the Lemma.
\end{proof}

We are ready for the main result of this section.
\begin{thm}Let $I(G) \subset S = \k[x_1, \ldots,x_{d}, y_1, \ldots, y_d]$ be the edge ideal of a Cohen-Macaulay tree $G$ of dimension $d$. Then for all $t \ge 1$,
    $$\depth S/I(G)^t =  \max \{d-t+1, 1\}.$$    
\end{thm}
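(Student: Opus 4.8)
The plan is to combine the upper bound from Corollary~\ref{cor_upperbound_CM_tree} with the lower bound from Lemma~\ref{lem_lower_bound_whisker_trees}, after first dispatching the small and large values of $t$ by hand. Write $G$ as the whisker graph of a tree $T$ on $d$ vertices, so that $I(G) \subset S = \k[x_1,\ldots,x_d,y_1,\ldots,y_d]$ with $E(G) = E(T) \cup \{x_iy_i \mid i = 1,\ldots,d\}$. For $t = 1$, the assertion $\depth S/I(G) = d$ is exactly the Cohen-Macaulay hypothesis ($\dim S/I(G) = d$ since a maximal independent set picks one of $x_i, y_i$ for each $i$). Since $G$ has a leaf edge (any $x_iy_i$), Lemma~\ref{lem_depth_power_non_increasing} shows $\depth S/I(G)^t$ is non-increasing in $t$; as $G$ is a forest, $\depth S/I(G)^t \ge 1$ for all $t$, so once we establish $\depth S/I(G)^{d-1} = 1$ we get $\depth S/I(G)^t = 1 = \max\{d-t+1,1\}$ for all $t \ge d-1$ automatically. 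Hence it remains to treat $2 \le t \le d-1$ and show $\depth S/I(G)^t = d - t + 1$.

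For the lower bound in this range, apply Lemma~\ref{lem_lower_bound_whisker_trees} with the forest $T$ and $U = V(T)$, so $|U| = d$ and $R = S$; this gives $\depth S/I(G)^t \ge d - t + 1$ directly for every $t \ge 1$. For the upper bound, I would split into cases according to whether $t \le d-2$ or $t = d-1$. When $2 \le t \le d-2$, Corollary~\ref{cor_upperbound_CM_tree} (applied with $t-1$ in place of $t$, so to the exponent $t$) yields $\depth S/I(G)^{t} \le d - (t-1) = d-t+1$; one just needs $2 \le t-1 \le d-2$, i.e. $3 \le t \le d-1$, so this covers $3 \le t \le d-2$, combined with the lower bound giving equality there. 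The remaining boundary cases $t = 2$ and $t = d-1$ need separate attention: for $t = 2$ we want $\depth S/I(G)^2 = d-1$, and for $t = d-1$ we want $\depth S/I(G)^{d-1} = 1$ (which by the non-increasing property and the forest bound also closes out all $t \ge d-1$).

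The case $t = d-1$ is the crux, since $d - (d-1) + 1 = 2$ is the value predicted by the formula $\max\{d-t+1,1\}$, not $1$; so in fact I should be careful: the formula gives $2$ at $t = d-1$ and $1$ at $t = d$. Re-examining, the lower bound from Lemma~\ref{lem_lower_bound_whisker_trees} gives $\depth S/I(G)^{d-1} \ge 2$, and for the matching upper bound I would invoke Corollary~\ref{cor_upperbound_CM_tree} with the exponent $t = d-1$, which requires $2 \le t - 1 \le d-2$, i.e. $3 \le d-1 \le d$; this holds for $d \ge 4$. For $t = 2$ the upper bound $\depth S/I(G)^2 \le d-1$ should follow from Lemma~\ref{lem_upperbound_power_tree} applied to a single edge $\e = \{e_1\}$ of $T$ with $G[\bar\e]$ again a Cohen-Macaulay whisker graph of dimension $d-2$, giving $\depth S/I(G)^2 \le 1 + (d-2) = d-1$; here one must check that $G[\bar\e]$ is weakly chordal, which holds because it is a forest. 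Small values of $d$ (say $d \le 3$) should be checked directly. The main obstacle I anticipate is bookkeeping at the boundary exponents and verifying that the induced subgraphs $G[\bar\e]$ arising in Lemma~\ref{lem_upperbound_power_tree} and Corollary~\ref{cor_upperbound_CM_tree} are honestly Cohen-Macaulay whisker graphs of the claimed dimension (i.e. that all vertices not in $N[\e]$ either carry their whisker or become isolated and contribute correctly to the depth), so that the upper and lower bounds meet exactly.
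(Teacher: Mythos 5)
Your overall route is the same as the paper's: Villarreal's description of $G$ as the whisker graph of a tree $T$ on $d$ vertices, the lower bound $\depth S/I(G)^t \ge d-t+1$ from Lemma~\ref{lem_lower_bound_whisker_trees} with $U=V(T)$, and the upper bound from Corollary~\ref{cor_upperbound_CM_tree} (whose proof is exactly your proposed application of Lemma~\ref{lem_upperbound_power_tree} to connected subtrees of $T$). Your handling of $t=1$ and of the boundary exponent $t=2$ (a single edge $\e=\{e_1\}$ of $T$, with $G[\bar\e]$ a whisker forest plus isolated vertices of total depth $d-2$) is correct, and your instinct that the stated range $2\le t\le d-2$ of the Corollary forces separate attention at the boundaries is sound.

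There is, however, one genuine gap: the exponents $t\ge d$. In your first paragraph you planned to close out all large $t$ by showing $\depth S/I(G)^{d-1}=1$ and invoking monotonicity, but you then correctly observed that the formula predicts the value $2$ at $t=d-1$, and you prove exactly that. After this self-correction you never return to $t\ge d$: Lemma~\ref{lem_depth_power_non_increasing} now only gives $\depth S/I(G)^t\le 2$ for $t\ge d$, and the forest bound gives $\ge 1$, so the value at $t\ge d$ is left undetermined between $1$ and $2$. The missing step is the upper bound $\depth S/I(G)^{d}\le 1$. It follows from the same tool you are already using: take $\e=E(T)$, the full set of $d-1$ edges of $T$, which is connected with $N[\e]=V(G)$, so $G[\bar\e]$ is empty and Lemma~\ref{lem_upperbound_power_tree} gives $\depth S/I(G)^{d}\le 1+0=1$; monotonicity and the forest bound then settle all $t\ge d$. (Alternatively, Trung's formula $\dstab(I(G))=2d-\epsilon_0(G)=d$ together with the known limit depth would do the same job.) Note that the paper's own one-line proof cites Corollary~\ref{cor_upperbound_CM_tree}, whose stated range likewise omits the exponents $2$ and $d$, so the boundary bookkeeping you flagged is exactly where the care is needed.
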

\begin{proof}
    The conclusion follows from the result of Villarreal \cite{V}, Corollary \ref{cor_upperbound_CM_tree}, and Lemma \ref{lem_lower_bound_whisker_trees}.
\end{proof}

\section{Depth of powers of edge ideals of some Cohen-Macaulay bipartite graphs}\label{sec_depth_power_CM_bipartite_graph}
In this section, we study the depth of powers of some Cohen-Macaulay bipartite graphs. First, we consider a graph constructed by Banerjee and Munkudan \cite{BM}. 

\begin{thm}\label{thm_depth_BM_ideal} Let $S = k[x_1,\ldots,x_d,y_1,\ldots,y_d]$. For a fix integer $j$ such that $1 \le j \le d$, let $G_{d,j}$ be a bipartite graph whose edge ideal is $I(G_{d,j}) = (x_i y_i, x_1 y_i, x_k y_j \mid 1 \le i \le d, 1 \le k \le j)$. Then 
$$\depth S/I(G_{d,j})^s = \max(1,d - j -s + 3),$$
for all $s \ge 2$.    
\end{thm}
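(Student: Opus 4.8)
The plan is to mimic the strategy used for Cohen-Macaulay trees: establish a matching upper bound and lower bound, both for all $s \ge 2$, so that they agree with $\max(1, d-j-s+3)$. First I would understand the structure of $G_{d,j}$. The vertex $x_1$ is adjacent to every $y_i$, and the vertex $y_j$ is adjacent to $x_1, \ldots, x_j$; moreover each $x_i$ has its whisker $y_i$. So $G_{d,j}$ contains the whisker structure $\{x_iy_i\}$ plus the two ``broadcast'' vertices $x_1$ and $y_j$. Since $G_{d,j}$ has leaf edges (for instance $x_dy_d$ when $d > j$, and in general one can check a leaf edge exists), Lemma \ref{lem_depth_power_non_increasing} gives that $\depth S/I(G_{d,j})^s$ is non-increasing in $s$, which will let me reduce the range of $s$ I must treat directly.

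For the upper bound $\depth S/I(G_{d,j})^{s} \le d-j-s+3$ (when this is $\ge 1$), I would select a connected set of $s-1$ edges $\e$ whose bipartite completion, after colon, kills a large chunk of the graph, and apply Lemma \ref{lem_upperbound_power_tree}. A natural choice is a ``star'' of edges through $x_1$: take $e_1 = x_1y_1, e_2 = x_1y_2, \ldots$ or instead a path/star using the vertex $y_j$; one wants $G[\e]$ bipartite (automatic, $G$ is bipartite) and $G[\bar{\e}]$ weakly chordal with a computable/Cohen-Macaulay-like depth. The key computation is: after removing $N[\e]$, what remains should be (an edge ideal whose ring has) depth exactly $d-j-s+2$, so that adding $1$ gives the bound. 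Concretely I expect $\e$ to consume $x_1$, $y_j$, and enough whisker pairs so that $G[\bar{\e}]$ is again a whisker graph on the leftover $x_i$'s, of the appropriate dimension; then $\depth R/I(G[\bar\e])$ equals that dimension by Villarreal. I would verify $G[\bar{\e}]$ is weakly chordal (it should be a forest-with-whiskers, hence chordal) to legitimately invoke Theorem \ref{thm_pd_weakly_chordal} inside Lemma \ref{lem_upperbound_power_tree}.

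For the lower bound $\depth S/I(G_{d,j})^s \ge d-j-s+3$, I would run an induction on $s$ (and on $d$) via the colon/sum decomposition of Lemma \ref{lem_depth_colon}, peeling off leaf edges using Lemma \ref{lem_colon_leaf}, exactly as in the proof of Lemma \ref{lem_lower_bound_whisker_trees}. The point is that $I(G_{d,j})^s : (x_iy_i) = I(G_{d,j})^{s-1}$ for a leaf edge, and $I(G_{d,j})^s + (x_iy_i)$ restricts to a similar ideal on fewer variables, so one gets a recursion of the form $\depth \ge \min(\text{two inductive values})$, each at least $d-j-s+3$. Some care is needed with the special vertices $x_1$ and $y_j$: colon-ing or adding edges incident to them changes the combinatorics, and I anticipate one or two base cases (e.g. $j = d$, or $s = 2$) that must be handled by hand, possibly citing \cite[Theorem 4.1]{MTV1} for $s=2$ as was done before. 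Since $d-j-s+3$ can drop to $1$, and a forest/graph with at least one edge always has positive depth, the $\max$ with $1$ is automatic once $s$ is large.

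The hard part will be the upper bound: one must find a single clever choice of $\e$ that simultaneously is connected, has the right size $s-1$, and whose complement $G[\bar{\e}]$ has depth \emph{exactly} $d-j-s+2$ and is weakly chordal --- and then check that Lemma \ref{lem_upperbound_power_tree} applies and is sharp. If no single $\e$ works uniformly across the range of $s$, I would instead combine the upper bound from Lemma \ref{lem_upperbound_power_tree} for small $s$ with the monotonicity from Lemma \ref{lem_depth_power_non_increasing} and the known stable value (the graph is Cohen-Macaulay bipartite, so one can identify $\dstab$ and the limit depth, which must be $1$) to pin down the intermediate values. The lower bound induction, while lengthy, should be essentially routine given the template already in the paper.
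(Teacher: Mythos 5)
Your strategy is the paper's: the upper bound via Lemma \ref{lem_upperbound_power_tree} applied to a star of $s-1$ edges at $x_1$, and the lower bound via the colon/sum induction modelled on Lemma \ref{lem_lower_bound_whisker_trees}. However, the one concrete choice of $\e$ you write down, $\{x_1y_1,\ldots,x_1y_{s-1}\}$, does not give the stated bound. Since $N[x_1]=\{x_1,y_1,\ldots,y_d\}$, for any star at $x_1$ the complement $G[\bar{\e}]$ is an edgeless graph on the surviving $x_i$'s (so your expectation that it is a whisker graph is off, though edgeless is harmless for the lemma); with your choice the survivors are $x_s,\ldots,x_d$, giving only $\depth S/I(G_{d,j})^s\le d-s+2$, which misses the target $d-j-s+3$ by $j-1$. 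The essential point is that $\e$ must contain the edge $x_1y_j$, because only $N[y_j]=\{x_1,\ldots,x_j,y_j\}$ deletes $x_2,\ldots,x_j$. The paper takes $\e=\{x_1y_j,x_1y_{j+1},\ldots,x_1y_{j+s-2}\}$; then $V(G)\setminus N[\e]=\{x_{j+s-1},\ldots,x_d\}$ consists of exactly $d-j-s+2$ isolated vertices, and Lemma \ref{lem_upperbound_power_tree} gives $d-j-s+3$ on the nose for every $s$ in range. Note also that your fallback (monotonicity plus the known stable value) cannot recover the intermediate upper bounds: Lemma \ref{lem_depth_power_non_increasing} only guarantees drops of size $\ge 0$, not $\ge 1$, so you genuinely need a good $\e$ for each $s$.

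For the lower bound your template is right, but the paper's induction is on $d$ for fixed $j$, with base case $d=j$ quoted from \cite[Example 4.2]{BM}, and the splitting is on the \emph{vertex} $y_d$ (and then on $x_1$ inside the colon branch), after first reducing --- as in Step 3 of Lemma \ref{lem_lower_bound_whisker_trees} --- to the strengthened statement for $I(H)^s+(x_iy_i\mid i\ge j+1)$, where $H$ is the induced subgraph on $\{x_1,\ldots,x_j,y_1,\ldots,y_d\}$. Peeling leaf edges $x_iy_i$ alone reduces $s$ in the colon branch but leaves the sum branch with $I^s+(x_iy_i)$, which is not a power of an edge ideal of the same family; the strengthened induction hypothesis is exactly what makes that branch inductive. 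You gesture at this by citing the template, so the gap here is smaller, but it must be set up explicitly for the argument to close.
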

\begin{proof} Fix $j$, we prove by induction on $d$. The case $d = j$ follows from \cite[Example 4.2]{BM}. Now assume that the statement holds for $d-1$. Let $s$ be an exponent such that $2 \le s \le d - j - 2$. Let $e_1 = x_1y_j, e_2 = x_1y_{j+1}, \ldots, e_{s-1} = x_1 y_{j + s-2}$ and $\e = \{e_1, \ldots, e_{s-1}\}$. We have $G_{d,j}[\bar {\e}]$ is the disjoint union on $d-j-s+2$ edges. By Lemma \ref{lem_upperbound_power_tree}, 
$$\depth S/I(G_{d,j})^s \le 1 + \depth R/I(G_{d,j}[\bar{\e}]) = d- j - s + 3.$$
In particular, $\depth S/I(G_{d,j})^{d-j-2} \le 1$. By \cite[Theorem 4.4]{T}, we deduce that $\depth S/I(G_{d,j})^t = 1$ for all $t \ge d-j-2$.

For the lower bound, the proof is similar to that of Lemma \ref{lem_lower_bound_whisker_trees}. Let $H$ be the induced subgraph of $G_{d,j}$ on $\{x_1,\ldots,x_j,y_1,\ldots,y_d\}$. Then 
\begin{align*}
    I(H) &= (x_1y_i,x_ky_k,x_ky_j \mid 1 \le i \le d, 2 \le k \le j)\\
    I(G_{d,j}) &= I(H) + (x_iy_i \mid i \ge j + 1).
\end{align*}
    
For ease of reading, we divide the proof into several steps. 

\vspace{1mm}

\noindent {\bf Step 1.} With an argument similar to Step 3 of the proof of Lemma \ref{lem_lower_bound_whisker_trees}, we reduce to proving the following lower bound
\begin{equation}
    \depth S/(I(H)^s + (x_iy_i \mid i \ge j + 1)) \ge d-j-s + 3.
\end{equation}

\vspace{1mm}
\noindent {\bf Step 2.} Let $J = I(H)^s + (x_iy_i \mid i \ge j + 1)$. By Lemma \ref{lem_depth_colon}, we will prove the bound for $\depth S/(J + (y_d))$ and $\depth S/(J:y_d)$. We have 
$$J + (y_d) = I(H)^s + (x_1y_1,x_iy_i \mid j+1 \le i \le d-1) + (y_d).$$ 
Hence, $x_d$ is a free variable of $J + (y_d)$. By induction, we have 
$$\depth S/(J + (y_d)) \ge 1 + (d-1 - j +s + 3) = d - j -s +3.$$
Furthermore, 
$$J:y_d = x_1 I(H)^{s-1} + (x_iy_i \mid j+1 \le i \le d-1) +  (x_d).$$
Let $K = x_1 I(H)^{s-1} + (x_iy_i \mid j+1 \le i \le d-1)$ and $R = \k[x_1,\ldots,x_{d-1},y_1,\ldots,y_{d-1}]$. Then, $\depth S/(J:y_d) = \depth R/K$. Since $K+(x_1) = (x_iy_i \mid j+1 \le i \le d-1) + (x_1)$, by Lemma \ref{lem_depth_colon}, it remains to bound $\depth R/K:x_1$. We have
$$K:x_1 = I(H)^{s-1} + (x_iy_i \mid j+1 \le i \le d-1).$$
Hence, by induction,
$$\depth R/(K:x_1) \ge d-1 - (s-1) - j + 3 = d -j -s + 3.$$
That concludes the proof of the Theorem.
\end{proof}

In \cite{BM}, Banerjee and Mukundan said that the depth sequence of powers of the edge ideal of a graph $G$ has a drop at $k$ if $\depth S/I^k - \depth S/I^{k+1} > 1$. They then constructed a Cohen-Macaulay bipartite graph with an arbitrary number of drops in their depth sequence. Nonetheless, their construction is via sum of ideals, and hence, the resulting Cohen-Macaulay bipartite graph $G$ with $k$ drops has $k$ connected components. By the result of Nguyen and the last author \cite{NV2}, we may reduce the computation of depth of powers of edge ideals of disconnected graphs to that of their connected components. By our computation experiment, we believe that we may construct a connected Cohen-Macaulay bipartite graph with an arbitrary number of drops. To conclude, we provide an example of a connected Cohen-Macaulay bipartite graph with two drops in its depth sequence of powers. 

\begin{thm}\label{exm_CM_bipartite_graphs_two_drops} Assume that $d \ge 5$. Let $S = k[x_1,\ldots,x_d,y_1,\ldots,y_d]$. For each $a$ such that $2 \le a \le d-1$, let $G_{d,a}$ be a bipartite graph whose edge ideal is 
$$I(G_{d,a}) = x_1(y_1, \ldots, y_d) + (x_iy_j \mid 2 \le i \le j \le a) + (x_iy_j \mid a+1 \le i \le j \le d).$$
Then
$$\depth S/I(G_{d,a})^t = \begin{cases} 
d & \text{ if } t = 1\\
\min(a,d-a+1) & \text{ if } t = 2\\
1 & \text{ if } t \ge 3.\end{cases}$$       
\end{thm}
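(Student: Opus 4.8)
The plan is to treat the three cases $t=1$, $t=2$, and $t\ge 3$ separately, mirroring the structure already used for Theorem~\ref{thm_depth_BM_ideal}. For $t=1$, the graph $G_{d,a}$ is Cohen--Macaulay bipartite: indeed, $x_1y_1,\ldots,x_dy_d$ is a perfect matching, and one checks directly (or via Herzog--Hibi) that $G_{d,a}$ satisfies the combinatorial criterion for Cohen--Macaulayness, so $\depth S/I(G_{d,a}) = \dim S/I(G_{d,a}) = d$. For the case $t\ge 3$, I would exhibit, as in the proof of Theorem~\ref{thm_depth_BM_ideal}, a connected set of edges $\e$ all incident to $x_1$ — for instance $\e = \{x_1y_{a+1}, x_1y_{a+2}, \ldots\}$ — chosen so that $G_{d,a}[\bar\e]$ has a controlled structure (a disjoint union of a few edges together with isolated vertices), then apply Lemma~\ref{lem_upperbound_power_tree} to get $\depth S/I(G_{d,a})^{t+1}\le 1 + \depth R/I(G_{d,a}[\bar\e])$. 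By pushing $|\e|$ large enough one forces $\depth S/I(G_{d,a})^t \le 1$ for some small $t$, and since $G_{d,a}$ has a leaf edge (namely a whisker-type edge, or at worst $G_{d,a}$ is not forest but still $x_1y_k$ for suitable $k$ behaves like a leaf — more safely, invoke Trung \cite[Theorem 4.4]{T} as in the earlier proof to propagate $\depth S/I^t=1$ upward to all $t\ge 3$); combined with the trivial lower bound $\depth S/I^t\ge 1$ this settles $t\ge 3$, provided one also checks $\depth S/I(G_{d,a})^3 \le 1$ by a direct choice of $\e$ with $|\e| = 2$ landing on a single edge in $G[\bar\e]$.

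The heart of the theorem is the value at $t=2$, namely $\depth S/I(G_{d,a})^2 = \min(a, d-a+1)$. For the upper bound, I would apply Lemma~\ref{lem_upperbound_power_tree} with $\e = \{e_1\}$ a single edge $x_1 y_b$: then $t+1 = 2$, and $G_{d,a}[\bar\e]$ is the induced subgraph on the vertices not in $N[x_1]\cup N[y_b]$. Choosing $b\le a$ removes $x_1$ and all $x_i$ with $2\le i\le b$ (those adjacent to $y_b$) together with all $y_j$ adjacent to $x_1$, i.e. all of $y_1,\ldots,y_d$; what remains is the induced subgraph on $\{x_{b+1},\ldots,x_d\}$ with no edges among the indices $\ge a+1$ except those forming the second complete-bipartite-like block, which after deleting the $y$'s is edgeless. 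So $R/I(G_{d,a}[\bar\e])$ is a polynomial ring and $\depth R/I(G[\bar\e])$ equals the number of surviving vertices; optimizing the choice of $b$ (and the symmetric choice of an edge $x_1 y_b$ with $b > a$, which by the symmetry of the two blocks yields the bound with $a$ replaced by $d-a+1$) gives $\depth S/I(G_{d,a})^2 \le \min(a, d-a+1)$. One must be careful to count the leftover isolated vertices correctly and to verify $G[\bar\e]$ is weakly chordal (it is a disjoint union of edges and isolated vertices, hence chordal).

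For the lower bound $\depth S/I(G_{d,a})^2 \ge \min(a,d-a+1)$, I would run a colon-ideal induction in the spirit of Step~1--Step~4 of the proof of Lemma~\ref{lem_lower_bound_whisker_trees}, repeatedly applying Lemma~\ref{lem_depth_colon} and Lemma~\ref{lem_colon_leaf}: peel off the whisker-like edges at $x_1$ (the edges $x_1 y_k$ where $y_k$ has no other neighbour), reducing either to a smaller graph of the same shape or to an edge ideal ($t=1$ case) whose depth is computed by \cite[Theorem 4.1]{MTV1}. The colon $I(G_{d,a})^2 : f$ by a product of two edges incident to $x_1$ becomes, by Lemma~\ref{lem_colon_product_edges}, the edge ideal of a bipartite completion, whose depth we again read off. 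The main obstacle I anticipate is precisely this lower bound bookkeeping at $t=2$: one must track how the two ``blocks'' $\{2,\ldots,a\}$ and $\{a+1,\ldots,d\}$ interact through the common vertex $x_1$, ensure the induction parameter genuinely decreases in every branch, and confirm that the minimum $\min(a,d-a+1)$ — rather than a sum — is what emerges, which reflects the fact that the two blocks share $x_1$ and so cannot both contribute their full depth simultaneously. Verifying the $t=1$ Cohen--Macaulayness claim and the weak-chordality hypotheses needed to invoke Lemma~\ref{lem_upperbound_power_tree} are routine but should be stated explicitly.
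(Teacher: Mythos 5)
Your plan for $t=1$ and for the upper bounds is sound and matches the paper: for $t=2$ the single edges $\e=\{x_1y_a\}$ and $\e=\{x_1y_d\}$ leave edgeless induced subgraphs on $\{x_{a+1},\ldots,x_d\}$ and $\{x_2,\ldots,x_a\}$ respectively, giving $\depth S/I(G_{d,a})^2\le\min(a,d-a+1)$ via Lemma~\ref{lem_upperbound_power_tree}. But your suggested $\e$ for $t\ge 3$ is wrong as stated: if all edges of $\e$ lie at $x_1$ and point into a single block, say $\e\subseteq\{x_1y_{a+1},\ldots,x_1y_d\}$, then $N[\e]$ never contains $x_2,\ldots,x_a$, so $G[\bar\e]$ always has at least $a-1\ge 1$ isolated vertices and the bound you get is $\depth S/I^{t+1}\le a$, never $\le 1$, no matter how large $|\e|$ is. You must take one edge into each block; the unique minimal choice is $\e=\{x_1y_a,x_1y_d\}$, for which $N[\e]=V(G)$ and hence $\depth S/I^3\le 1$ (equivalently, $I^3:(x_1y_a\cdot x_1y_d)=I(K_{d,d})$, which is how the paper phrases it). Also note that ``landing on a single edge in $G[\bar\e]$'' would only give the bound $\le 2$, not $\le 1$; you need $G[\bar\e]$ empty.

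The genuine gap is the lower bound $\depth S/I(G_{d,a})^2\ge\min(a,d-a+1)$, which you yourself flag as the main obstacle but for which your proposed mechanism does not work. Peeling off ``whisker-like edges at $x_1$'' via Lemma~\ref{lem_colon_leaf} gets you nowhere: the only leaf edge incident to $x_1$ is $x_1y_1$ (every other $y_k$ with $2\le k\le a$ or $a+1\le k\le d$ has further neighbours), and coloning or adding $x_1y_1$ does not separate the two blocks or decrease a useful induction parameter. The argument that actually produces the minimum is a different decomposition: split on the single edge $f=x_1y_d$ using Lemma~\ref{lem_depth_colon}; handle $I^2+(x_1y_d)=(I^2+(x_1))\cap(I^2+(y_d))$ by the depth lemma for intersections, where $I+(x_1)$ becomes the \emph{sum} of the two disjoint Cohen--Macaulay block ideals $I(H_1)$, $I(H_2)$ so that the powers-of-sums theorem of Nguyen--Vu \cite{NV2} yields exactly $1+\min(a,d-a+1)$ (this is where the minimum, rather than a sum, enters); and handle $I^2:(x_1y_d)=I(G_{d,a})+(x_{a+1},\ldots,x_d)(y_1,\ldots,y_d)$ by Lemma~\ref{lem_colon_product_edges} and a further intersection with $(y_1,\ldots,y_d)$. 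None of these ingredients (the intersection trick, the appeal to \cite{NV2}, the explicit computation of the colon) is present in your sketch, so as written the $t=2$ lower bound remains unproved.
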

\begin{proof} Note that $G_{d,a}$ is a Cohen-Macaulay bipartite graph of dimension $d$ by \cite{EV, HH}. By Lemma \ref{lem_colon_product_edges},
$$I(G_{d,a})^3 : (x_1 y_a x_1 y_{d}) = I(K_{d,d}).$$
By Lemma \ref{lem_upperbound} and Lemma \ref{lem_depth_power_non_increasing}, $\depth S/I(G_{d,a})^t =1$ for all $t \ge 3$. Thus, it remains to determine $\depth S/I(G_{d,a})^2$. When $\e = \{x_1y_a\}$, $G_{d,a}[\bar{\e}]$ is the empty graph on $d-a$ vertices $\{x_{a+1},\ldots, x_d\}$. When $\e = \{x_1y_d\}$, $G_{d,a}[\bar{\e}]$ is the empty graph on $a-1$ vertices $\{x_2,\ldots,x_a\}$. Hence, by Lemma \ref{lem_upperbound_power_tree},
$$\depth S/I(G_{d,a})^2 \le \min(a,d-a+1).$$

For the lower bound, we prove by induction on the tuple $(d,a)$. We divide the proof into several steps.

\vspace{1mm}

\noindent {\bf Step 1.} $\depth S/(I(G_{d,a})^2 + (y_d)) \ge \min (a,d-a+1)$. Since $I(G_{d,a})^2 + (y_d) = I(G_{d-1,a})^2 + (y_d)$ and $x_d$ is a free variable of $I(G_{d,a})^2 + (y_d)$. By induction, we have 
$$\depth S / (I(G_{d,a})^2 + (y_d)) \ge 1 + \min (a,d-1-a+1) \ge \min(a,d-a+1).$$

\vspace{1mm}

\noindent {\bf Step 2.} $\depth S/(I(G_{d,a})^2 + (x_1)) \ge \min(a,d-a+1)$. Since $I(G_{d,a})^2 + (x_1) = (I(H_1) + I(H_2))^2 + (x_1)$, where 
\begin{align*}
    I(H_1) &= (x_iy_j \mid 2 \le i \le j \le a),\\
    I(H_2) &= (x_iy_j \mid a+1 \le i \le j \le d)
\end{align*}
are Cohen-Macaulay ideals of dimensions $a-1$ and $d-a$ respectively. Since $y_1$ is a free variable of $I(G_{d,a})^2 + (x_1)$, by \cite[Theorem 1.1]{NV2},
\begin{align*}
    \depth S/(I(G_{d,a})^2 + (x_1))  = 1 + \min (& \depth R_1 / I(H_1) + \depth R_2 / I(H_2) + 1,\\
    &\depth R_1/I(H_1)^2 + \depth R/I(H_2),\\
    &\depth R_1/I(H_1) + \depth R_2 / I(H_2)^2) \\
    &= 1 + \min (a,d-a+1),
\end{align*}
where $R_1=\k[x_i,y_i \mid i = 2, \ldots, a]$ and $R_2 = \k[x_i,y_i \mid i = a+1,\ldots, d]$.

\vspace{1mm}

\noindent {\bf Step 3.} $\depth S/(I(G_{d,a})^2 + (x_1,y_d)) \ge \min(a,d-a+1)$. Since $I(G_{d,a}) + (x_1,y_d)$ is the mixed sum of two Cohen-Macaulay ideals of dimensions $a-1$ and $d-a-1$ respectively with free variables $y_1,x_d$ in $S$. With argument similar to Step 2, we deduce the desired lower bound.

\vspace{1mm}

\noindent {\bf Step 4.} $\depth S/(I(G_{d,a})^2 + x_1y_d) \ge \min (a,d-a+1)$. Since $I(G_{d,a})^2 + (x_1y_d) = (I(G_{d,a})^2 + (x_1)) \cap (I(G_{d,a})^2 + (y_d))$. The conclusion follows from Step 1, Step 2, Step 3, and a standard lemma on depth \cite[Proposition 1.2.9]{BH}.

\vspace{1mm}

\noindent {\bf Step 5.} $\depth S/(I(G_{d,a})^2:(x_1y_d)) \ge \min (a,d-a+1)$. By Lemma \ref{lem_colon_product_edges},
\begin{equation}\label{eq_4_3}
    I(G_{d,a})^2 : (x_1y_d) = I(G_{d,a}) + (x_{a+1},\ldots,x_d) \cdot (y_1,\ldots,y_d).
\end{equation}
Let $L = I(G_{d,a})^2 : (x_1y_d)$. Then 
$$L = (I(G_{d,a}) + (x_{a+1},\ldots,x_d)) \cap (y_1,\ldots,y_d).$$ 
Since $\depth S/(y_1,\ldots,y_d) = d$ and $\depth S/(x_{a+1},\ldots,x_d,y_1,\ldots,y_d) = a$, it remains to show that 
\begin{equation}
    \depth S/M \ge \min(a,d-a+1),
\end{equation}
where $M = I(G_{d,a}) + (x_{a+1},\ldots,x_d)$. We have $M:x_1 = (x_{a+1},\ldots,x_d,y_1,\ldots,y_d)$ and $M+(x_1)$ is a Cohen-Macaulay ideal of dimension $a-1$ and $y_1,y_{a+1},\ldots,y_d$ are free variables of $M + (x_1)$. By Lemma \ref{lem_depth_colon}, $\depth S/M \ge \min(a,d-a+1)$ as required.

Thus, we deduce that $\depth S/I(G_{d,a})^2 \ge \min(a,d-a+1)$ by Step 4, Step 5 and Lemma \ref{lem_depth_colon}. That concludes the proof of the Theorem.
\end{proof}

\section*{Acknowledgments}
Nguyen Thu Hang is partially supported by the Thai Nguyen University of Sciences (TNUS) under the grant number CS2021-TN06-16.

\end{document}